\newtheorem{theorem}{Theorem}[section]
\newtheorem{lemma}[theorem]{Lemma}
\theoremstyle{definition}
\newtheorem{definition}{Definition}[section]
\theoremstyle{remark}
\newtheorem{remark}{Remark}[section]
\newcommand{\RR}{\mathbb{R}}
\newcommand{\NN}{\mathbb{N}}
\begin{document}
\title{Bounded solutions of a $k$-Hessian equation in a ball \thanks{Partially supported by Grant FONDECYT 1150230}}

\author{Justino S\'anchez\,\,$^{a}$\;\; and
Vicente Vergara \,$^{a,b}$}
\date{}
\maketitle

\begin{center}
$^a$\,\footnotesize{Departamento de Matem\'{a}ticas, Universidad de La Serena\\
 Avenida Cisternas 1200, La Serena 1700000, Chile.}
\\email: jsanchez@userena.cl
\end{center}
\begin{center}
$^b$\,\footnotesize{Universidad de Tarapac\'{a}, Avenida General Vel\'{a}squez 1775,\\
Arica, Chile.}
\\email: vvergaraa@uta.cl
\end{center}

\begin{abstract}
We consider the problem
\begin{equation}\label{Eq:Abstract}
\begin{cases}
S_k(D^2u)= \lambda (1-u)^q &\mbox{in }\;\; B,\\
u <0 & \mbox{in }\;\; B,\\
u=0 &\mbox{on }\partial B,
\end{cases}
\end{equation}
where $B$ denotes the unit ball in $\RR^n$, $n>2k$ ($k\in \NN$), $\lambda>0$ and $q > k$. We study the existence of negative bounded radially symmetric solutions of \eqref{Eq:Abstract}. In the critical case, that is when $q$ equals Tso's critical exponent $q=\frac{(n+2)k}{n-2k}=:q^*(k)$, we obtain exactly either one or two solutions depending on the parameters. Further, we express such solutions explicitly in terms of Bliss functions. The supercritical case is analysed following the ideas develop by Joseph and Lundgren in their classical work \cite{JoLu73}. In particular, we establish an Emden-Fowler transformation which seems to be new in the context of the $k$-Hessian operator. We also find a critical exponent, defined by
\begin{equation*}
q_{JL}(k)=
\begin{cases}
k\frac{(k+1)n-2(k-1)-2\sqrt{2[(k+1)n-2k]}}{(k+1)n-2k(k+3)-2\sqrt{2[(k+1)n-2k]}}, & n>2k+8,\\
\infty, & 2k < n \leq 2k+8,
\end{cases}
\end{equation*}
which allows us to determinate the multiplicity of the solutions to \eqref{Eq:Abstract} int the two cases $q^*(k)\leq q < q_{JL}(k)$ and $q\geq q_{JL}(k)$. Moreover, we point out that, for $k=1$, the exponent $q_{JL}(k)$ coincides with the classical Joseph-Lundgren exponent.
\end{abstract}
2010 Mathematics Subject Classification. Primary: 35B33; Secondary: 34C37, 34C20, 35J62.

\noindent {\em Keywords and phrases.}\, $k$-Hessian operator; Radial solutions; Critical exponents; Phase analysis; Emden-Fowler transformation.
{\footnotesize}

\section{Introduction and main results}
Let $k \in \NN$ and let $\Omega$ be a suitable bounded domain in $\RR^n$. We consider the nonlinear problem
\begin{equation}\label{Eq:f}
\begin{cases}
S_k(D^2u)= f(u)&\mbox{in }\;\; \Omega,\\
u <0 & \mbox{in }\;\; \Omega,\\
u=0 &\mbox{on }\;\; \partial\Omega,
\end{cases}
\end{equation}
where $S_k(D^2u)$ stands for the $k$-Hessian operator of $u$ and $f$ is a given nonlinear source. Problem \eqref{Eq:f} has been studied extensively by many authors in different settings. See e.g. \cite{CaNS85, ChWa01, Trudinger95, Trudinger97, Tso89, Urbas90, Wang94}.\\
The $k$-Hessian operator $S_k$ is defined as follows. Let $u\in C^2(\Omega)$, $1\leq k\leq n$, and let $\Lambda=(\lambda_1,\lambda_2,...,\lambda_n)$ be the eigenvalues of the Hessian matrix $(D^2u)$. Then the $k$-Hessian operator is given by
\[
S_k(D^2u)=P_k(\Lambda)=\sum_{1\leq i_1<...<i_k\leq n}\lambda_{i_1}...\lambda_{i_k},
\]
where $P_k(\Lambda)$ is the $k$-th elementary symmetric polynomial in the eigenvalues $\Lambda$, see e.g. \cite{Wang94, Wang09}. Note that $\{S_k:k=1,...,n\}$ is a family of operators which contains the Laplace operator ($k=1$) and the Monge-Amp\`{e}re operator ($k=n$). The monograph \cite{CaMi98} is devoted to applications of Monge-Amp\`{e}re equations to geometry and optimization theory. This family of operators has been studied extensively, see e.g. \cite{Jacobsen99, Tso90} and the references therein. Recently, this class of operators has attracted renewed interest, see e.g. \cite{Bran13, Gavitone09, Gavitone10, NaTa15, DeGa14, WaBa13, WaXu14}.\\
We point out that the $k$-Hessian operators are fully nonlinear for $k\neq 1$. Further, they are not elliptic in general, unless they are restricted to the class
\begin{equation}\label{phi-k-0}
\Phi_0^k(\Omega)=\{u\in C^2(\Omega)\cap C(\overline{\Omega}): S_{i}(D^2 u)\geq 0\;\;\mbox{in}\;\;\Omega,\, i=1,...,k,\,u=0\;\; \text{on}\;\; \partial\Omega \}.
\end{equation}
Observe that $\Phi_0^k(\Omega)$ belongs to the class of subharmonic functions. Further, the functions in $\Phi_0^k(\Omega)$ are negative in $\Omega$ by the maximum principle, see \cite{Wang94}. The $k$-Hessian operator defined on $\Phi_0^k(\Omega)$ imposes certain geometry restrictions on $\Omega$. More precisely, domains called admissible are those whose boundary $\partial\Omega$ satisfies the inequality
\begin{equation}\label{curv:1}
P_{k-1}(\kappa_1, ..., \kappa_{n-1})\geq 0,
\end{equation}
where $\kappa_1, ..., \kappa_{n-1}$ denote the principal curvatures of $\partial\Omega$ relative to the interior normal. A typical example of a domain $\Omega$ for which \eqref{curv:1} holds is a ball. For more details we refer the interested reader to \cite{Wang09}.

\begin{remark}
Problem \eqref{Eq:f} can be easily reformulated in order to study positive solutions under the change of variable $v=-u$, which in turn yields $S_k(D^2u)=(-1)^kS_k(D^2v)$ by the $k$-homogeneity of the $k$-Hessian operator.
\end{remark}
\medbreak

Now observe that, if $u\in \Phi_0^k(\Omega)$, then the right hand side of (\ref{Eq:f}) must be nonnegative. Typical examples of nonlinear terms $f$ appearing in the literature are $f(u) = |\lambda u|^p$ (see \cite{Tso90}), $f(u)= \lambda e^{-u}$ (see \cite{Shandrasekhar85, Fowler31, Gelfand63, JoLu73} for $k=1$ and \cite{Jacobsen99, Jacobsen04, JaSc02, JaSc04} for $1\leq k \leq n$) and  $f(u)=\lambda(1+u)^p$ (see \cite{BrVa97, JoLu73} for $k=1$.)
The seminal contribution on the analysis of critical values for $k=1$ with a polynomial and exponential source was made by Joseph and Lundgren in \cite{JoLu73}. In general, for problems of Gelfand type for $k\geq 1$, the first result ($k=n$) in the radial case is due to Cl\'ement et al. \cite{ClFM96} and for $1\leq k \leq n$ to Jacobsen \cite{Jacobsen04}.

\medbreak

Next, for our purposes we give some general notions of solutions to (\ref{Eq:f}). As usual, a classical solution (or solution) of (\ref{Eq:f}) is a function $u\in \Phi_0^k(\Omega)$ satisfying the equation in (\ref{Eq:f}). We recall the version of the method of super and subsolutions for \eqref{Eq:f}, see \cite[Theorem 3.3]{Wang94} for more details.
\begin{definition}
A function $u\in\Phi^k(\Omega):=\{u\in C^2(\Omega)\cap C(\overline{\Omega}): S_{i}(D^2 u)\geq 0\;\;\mbox{in}\;\;\Omega,\, i=1,...,k,\}$ is called a {\it subsolution} (resp. {\it supersolution}) of \eqref{Eq:f} if
\begin{equation*}
\begin{cases}
S_k(D^2u)\geq (\mbox{resp.}\leq)& f(u)\;\;\mbox{in }\;\; \Omega,\\
u\leq (\mbox{resp.}\geq)\;\; 0&\qquad\;\,\mbox{on }\; \partial \Omega.
\end{cases}
\end{equation*}
\end{definition}
Note that the trivial function $u\equiv 0$ is always a supersolution.

The following concept is needed to establish a general result on the existence of solutions to problem \eqref{Eq:f}.
\begin{definition}
We say that a function $v$ is a {\it maximal} solution of \eqref{Eq:f} if $v$ is a solution of \eqref{Eq:f} and, for each subsolution $u$ of \eqref{Eq:f}, we have $u\leq v$.
\end{definition}
We note that the notion of maximal solution of \eqref{Eq:f} seems to be new in the context of $k$-Hessian equations. In case the $k=1$, this notion corresponds to the usual minimal (positive) solution, see e.g. the monograph \cite{DuPa11} and the references therein.

In this article we study problem (\ref{Eq:f}) on the unit ball $B$ of $\RR^n$ with a polynomial source, i.e, the problem
\begin{equation}\label{Eq:f:pol}
\begin{cases}
S_k(D^2u)= \lambda (1-u)^q &\mbox{in }\;\; B,\\
u <0 & \mbox{in }\;\; B,\\
u=0 &\mbox{on }\partial B,
\end{cases}
\end{equation}
where $\lambda\in\RR$ is a parameter and $q>0$. We recall that the $k$-Hessian operator in radial coordinates can be written as $S_k(D^2u)=c_{n,k}r^{1-n}\left(r^{n-k}(u')^k \right)'$, where $c_{n,k}$ is defined by $c_{n,k}=\binom{n}{k}/n$ and $\binom{n}{k}$ denotes the binomial coefficient. Next, in order to state our main result, we write \eqref{Eq:f:pol} in radial coordinates, i.e.,
\begin{equation*}
(P_{\lambda})\qquad
\begin{cases}
c_{n,k}r^{1-n}\left(r^{n-k}(u')^k \right)' = \lambda (1-u)^q\,,\quad 0<r<1,\\
u  < 0 \,, \hspace{4.65cm} 0<r<1,\\
u'(0)  =0,\, u(1)=0. &
\end{cases}
\end{equation*}
Now we introduce the space of functions $\Phi_0^k$ defined on $\Omega=(0,1)$ as in \eqref{phi-k-0}, for problem $(P_{\lambda})$:
\[
\Phi_0^k=\{u\in C^2((0,1))\cap C^1([0,1]): \left(r^{n-i}(u')^i \right)'\geq 0\;\;\mbox{in}\;\;(0,1) ,\, i=1,...,k,\,u'(0)=u(1)=0\}.
\]
We note that the functions in $\Phi_0^k$ are non positive on $[0,1]$. However, if $\left(r^{n-i}(u')^i \right)'> 0$ for all $i=1,\ldots, k$, then any function in $\Phi_0^k$ is negative and strictly increasing on $(0,1)$. This in turn implies that, if we are looking for solutions of ($P_\lambda$) in $\Phi_0^k$, then the parameter $\lambda$ must be positive.

\begin{definition}
Let $\lambda>0$. We say that a function $u \in C([0,1])$ is:
\begin{itemize}
\item[(i)] a {\it classical solution} of ($P_\lambda$) if $u\in \Phi_0^k$ and the equation in ($P_\lambda$) holds;
\item[(ii)] an {\it integral solution} of ($P_\lambda$) if $u$ is absolutely continuous on $(0,1]$, $u(1)=0$, $\int_0^1 r^{n-k}(u'(r))^{k+1} dr <\infty$ and the equality
\[
c_{n,k}r^{n-k}(u'(r))^k = \lambda \int_0^r s^{n-1}(1-u(s))^qds,\, \, \text{ a.a. } r\in (0,1),
\]
holds whenever the integral exists.
\end{itemize}
\end{definition}

The concept of integral solution was introduced in \cite{ClFM96} for a more general class of radial operators, see e.g. \cite{ClFM96} and the references therein.  The standard concept of weak solution is equivalent in this case to the notion of integral solution, see \cite[Proposition 2.1]{ClFM96}.

\medbreak

The main goal of this paper is to describe the set of negative bounded radially symmetric solutions to \eqref{Eq:f:pol} in terms of the parameters. Our statements contain some classical results (i.e. $k=1$), see \cite{JoLu73}. We compute a critical exponent of the Joseph-Lundgren type, defined by
\begin{equation}\label{Exp:critical:Intro}
q_{JL}(k)=
\begin{cases}
k\frac{(k+1)n-2(k-1)-2\sqrt{2[(k+1)n-2k]}}{(k+1)n-2k(k+3)-2\sqrt{2[(k+1)n-2k]}}, & n>2k+8,\\
\infty, & 2k < n \leq 2k+8.
\end{cases}
\end{equation}
The Joseph-Lundgren exponent, i.e.,
\[
q_{JL}(1)=\frac{n-2\sqrt{n-1}}{n-4-2\sqrt{n-1}}
\]
was introduced in \cite{JoLu73}. We prove that $q_{JL}(k)$ plays the same role as the Joseph-Lundgren exponent. Another important exponent appearing in the analysis of the boundedness of solutions to \eqref{Eq:f:pol} is given by
\begin{equation}\label{q:ast:k}
q^*(k)=\frac{(n+2)k}{n-2k},
\end{equation}
which is smaller than $q_{JL}(k)$. The value $q^*(k)$ is well-known as the critical exponent in the study of the quasilinear $k$-Hessian operator, see \cite{Tso90} for more details. As soon as this critical exponent is crossed, a drastic change in the number of solutions of \eqref{Eq:f:pol} occurs.

Assuming that \eqref{Eq:f:pol} admits a solution $u_{\lambda}$ for some $\lambda$, we may define the positive constant
\begin{equation}\label{lambda-ast}
\lambda^\ast=\sup\{\lambda>0:\, \text{ there is a solution } u_{\lambda}\in C^2(B) \text{ of } \eqref{Eq:f:pol}\}.
\end{equation}

We show that $\lambda^*$ is finite in Theorem \ref{Main:3:Intro} below.
Now we state our main result.
\begin{theorem}\label{Main:1:Thm1} Let $q>k$ and $n>2k$. Let $q^*(k)$ and $q_{JL}(k)$ be as in \eqref{Exp:critical:Intro} and \eqref{q:ast:k}, respectively.
\begin{itemize}
	\item[(I)] If $q^\ast(k) < q < q_{JL}(k)$ and $\lambda$ is close to but not equal to
	\begin{equation}\label{Lambda:Tilde:Intro}
\tilde{\lambda}(k):= \tau^k(n-2k-k\tau),	
	\end{equation}
	where $\tau=\frac{2k}{q-k}$, then $(P_\lambda)$ has a large (finite) number of solutions. In addition, if $\lambda=\tilde{\lambda}(k)$ then there exists infinitely many solutions of $(P_\lambda)$.
	\item[(II)] If $n>2k+8$, $q\geq q_{JL}(k)$ and $\lambda\in (0, \lambda^*)$, then there exists only one solution of $(P_\lambda)$. Moreover, $\lambda^*=\tilde{\lambda}(k)$.
\end{itemize}
\end{theorem}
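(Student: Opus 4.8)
The plan is to reduce $(P_\lambda)$ to a shooting problem and then read the multiplicity off a phase-plane (Emden--Fowler) analysis. First I would exploit the scaling $u\mapsto u(\mu\,\cdot\,)$: it sends a solution of the $\lambda=1$ equation to a solution of the equation with parameter $\mu^{2k}$ while preserving the range of $u$. Concretely, if $U(\,\cdot\,;u_0)$ is the solution of the initial value problem with $U'(0)=0$, $U(0)=u_0<0$ and $\lambda=1$, and $\rho(u_0)$ denotes its first zero, then $r\mapsto U(\rho(u_0)\,r;u_0)$ solves $(P_\lambda)$ with $\lambda=\rho(u_0)^{2k}$. Hence solutions of $(P_\lambda)$ are in bijection with the level set $\{u_0<0:\lambda(u_0)=\lambda\}$, where $\lambda(u_0):=\rho(u_0)^{2k}$, and the whole statement becomes an assertion about the graph of the shooting map $u_0\mapsto\lambda(u_0)$. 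I would also record the explicit singular solution $U_s(r)=1-r^{-\tau}$, with $\tau=2k/(q-k)$ as in \eqref{Lambda:Tilde:Intro}, which solves the equation precisely at the value of $\lambda$ attached to $\tilde\lambda(k)$; this is the candidate for $\lambda^\ast$, whose finiteness I borrow from \thmref{Main:3:Intro}.

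Next I set $t=\log r$ and introduce the phase variables $z=ru'/(1-u)$ and $\eta=\lambda r^{2k}(1-u)^{q-k}$, both invariant under the above scaling. A direct computation turns $(P_\lambda)$ into the autonomous planar system $\dot z=\tfrac1k[\eta z^{1-k}-(n-2k)z+kz^2]$ and $\dot\eta=\eta[2k-(q-k)z]$, where the dot is $d/dt$. The origin is the $\alpha$-limit of every regular trajectory (as $r\to0$), while the interior equilibrium $E_\ast=(\tau,\eta_\ast)$, with $\eta_\ast$ equal up to the constant $c_{n,k}$ to $\tilde\lambda(k)=\tau^k(n-2k-k\tau)$, corresponds exactly to $U_s$. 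Linearizing at $E_\ast$ yields trace $T=(k+1)\tau-(n-2k)$ and determinant $D=2(n-2k-k\tau)>0$, the positivity using $q>q^\ast(k)$. Computing the discriminant $\Delta=T^2-4D$ and substituting $\tau=2k/(q-k)$, I find that $\Delta$ vanishes exactly at the root $\tau_{JL}=[(k+1)n-2k(k+3)-2\sqrt{2[(k+1)n-2k]}]/(k+1)^2$, which is equivalent to $q=q_{JL}(k)$; moreover $\Delta<0$ for every admissible $q$ precisely when $2k<n\le2k+8$, that is $q_{JL}(k)=\infty$. One also checks $T<0\iff q>q^\ast(k)$.

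For part (II), when $n>2k+8$ and $q\ge q_{JL}(k)$ one has $\Delta\ge0$ together with $T<0$ and $D>0$, so $E_\ast$ is a stable node and the regular orbit issuing from the origin approaches $E_\ast$ monotonically, without winding. I would transcribe this into strict monotonicity of the shooting map, showing $\lambda(u_0)$ increases on $(-\infty,0)$ from $\lambda(u_0)\to0$ as $u_0\to0^-$ to $\lambda(u_0)\to\tilde\lambda(k)$ as $u_0\to-\infty$; then every $\lambda\in(0,\tilde\lambda(k))$ is attained exactly once, which gives the unique solution and $\lambda^\ast=\tilde\lambda(k)$. For part (I), when $q^\ast(k)<q<q_{JL}(k)$ one has $\Delta<0$ and $T<0$, so $E_\ast$ is a stable focus and the regular orbit spirals into $E_\ast$, making infinitely many half-turns as $u_0\to-\infty$. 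Each half-turn forces $\lambda(u_0)$ to oscillate across $\tilde\lambda(k)$ with amplitude decaying to $0$; therefore the height $\tilde\lambda(k)$ is met infinitely often (infinitely many solutions at $\lambda=\tilde\lambda(k)$), while a nearby height is met a large but finite number of times that tends to $\infty$ as $\lambda\to\tilde\lambda(k)$.

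The main obstacle is precisely this last transcription in both parts: passing from the local character of $E_\ast$ (node versus focus) to the global monotonicity, respectively oscillation with decaying amplitude, of the shooting map $u_0\mapsto\lambda(u_0)$. This needs control of the entire regular orbit, not just the linearization at $E_\ast$ --- one must exclude spurious rotation in the node case and quantify the number of half-turns in the focus case --- and it requires a careful treatment of the equilibrium at the origin, which for $k>1$ is non-hyperbolic and must be shown to emit a clean one-parameter family of regular orbits foliating the relevant region of the plane.
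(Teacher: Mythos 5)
Your reduction is essentially the paper's own: the scale-invariant variables $z=ru'/(1-u)$, $\eta=\lambda r^{2k}(1-u)^{q-k}$ are an equivalent form of the Emden--Fowler variables \eqref{y-z:def}; your equilibrium $E_\ast$ is the paper's $O_2$ (the singular solution $1-r^{-\tau}$); your trace/determinant/discriminant algebra is correct and does reproduce $q_{JL}(k)$; and counting solutions of $(P_\lambda)$ as intersections of the regular orbit with horizontal lines is exactly the paper's device \eqref{line:z}. But the proposal stops exactly where the proof has to begin. The step you set aside as ``the main obstacle'' --- passing from the local type of $E_\ast$ to the global behaviour of the regular orbit --- is not a routine transcription; it is the whole content of Theorem \ref{Main:1:Thm1}, and the proposal contains no idea for carrying it out.

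Concretely, two global facts are missing, and neither follows from the linearization at $E_\ast$. First, nothing in your plan shows that the regular orbit converges to $E_\ast$ at all: a priori it could return to the origin (fast decay $v\sim -Cs^{-(n-2k)/k}$) or tend to a periodic orbit. The paper excludes the first alternative with a Pohozaev-type identity (\cite[Proposition 4.1]{ClFM96}), which would force $\int_0^\infty\xi^{n-1}(-v(\xi))^{q+1}d\xi=0$, and excludes limit cycles by a Bendixson--Dulac argument with the explicit Dulac function $\psi=z^{\frac{a}{2kq}-1}$ (Lemma \ref{qk:espiral}); only after that do the complex eigenvalues yield the infinite spiral needed for part (I). Second, for part (II), ``stable node'' is purely local: it forbids infinite winding but is perfectly compatible with the orbit looping around $E_\ast$ finitely many times before entering, and every such loop produces extra intersections with the lines $\eta=\lambda$ for $\lambda$ near $\tilde\lambda(k)$, i.e.\ extra solutions, destroying both uniqueness and the identification $\lambda^\ast=\tilde\lambda(k)$. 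The paper's Lemma \ref{lem:qJL:nodo} rules this out by a genuinely global argument: the power barrier $g(y)=cy^{\beta}$, $\beta=\frac{2k+a}{2a}$, joining $O_1$ to $O_2$, together with the sign analysis of $h=F_2-g'F_1$ showing the orbit can never cross it. Some argument of this kind is indispensable and is absent from your plan. Finally, your shooting framework silently assumes global existence and uniqueness for the degenerate initial value problem (your vector field is even singular on $\{z=0\}$ when $k>1$); the bijection ``solutions of $(P_\lambda)$ $\leftrightarrow$ level set of $u_0\mapsto\lambda(u_0)$'' collapses without it, and the paper needs a separate contraction argument (Lemma \ref{Uniq:sol}, built on \cite[Theorem 4.1]{ClMM98}) to secure it.
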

For the reader's convenience, we sketch the proof of Theorem \ref{Main:1:Thm1}. To this end, we briefly discuss, for our case, the method  introduced in \cite{JoLu73}. We first make a rescaling of problem ($P_{\lambda}$) to obtain
\begin{equation}\label{Eq:v:Intro}
\begin{cases}
\left(s^{n-k}(v')^k\right)'= \tilde{\lambda}(k) s^{n-1}(-v)^q, & s>0,\\
v(0) = -1,\\
v'(0) =0.
\end{cases}
\end{equation}
Note that $\tilde{\lambda}(k)>0$ if and only if $q > \frac{nk}{n-2k}$. Next, we introduce a dynamical system associated to \eqref{Eq:v:Intro}, through the following change of variables of Emden-Fowler type:
\[
y(t) = \left(\frac{dv}{dt}\right)^k e^{\frac{2k^2}{q-k}t},\, z(t) = \tilde{\lambda}(k) e^{\frac{2kq}{q-k}t} (-v(e^t))^q,\, s=e^t.
\]
That is,
\begin{equation}\label{Eq:Dyn:Intro}
\begin{cases}
\frac{dy}{dt}= z(t) - \frac{a}{q-k} y(t),\\
\,\\
\frac{dz}{dt} = \frac{2kq}{q-k} z(t) - q\tilde{\lambda}(k)^{\frac{1}{q}}y(t)^{\frac{1}{k}}z(t)^{1-\frac{1}{q}},\\
\end{cases}
\end{equation}
where
\begin{equation}\label{def:a:Intro}
a:=q(n-2k) - nk.
\end{equation}
The linearization of \eqref{Eq:Dyn:Intro} at the critical points allows for the analysis of the dynamical system on the phase plane, which depends on the eigenvalues of the Jacobian matrix. The location of these eigenvalues on the complex plane is determined by the sign of $2k-a$. The case $2k-a>0$ corresponds to instability, $2k-a<0$ to stability, and for $2k-a=0$ to a center. We point out that $2k-a=0$ is equivalent to defining $q=q^\ast(k)$.

We first note that $2k-a<0$ is equivalent to $q>q^*(k)$ and by \cite{ClMM98} there exists a classical global solution of \eqref{Eq:v:Intro}. In case $2k-a>0$ (i.e. $q<q^*(k)$) we cannot claim the existence of a global solution of \eqref{Eq:v:Intro} and thus discussion of this case is excluded. In the stability case, that is $q>q^*(k)$, depending on the behavior of the discriminant $\varDelta$ of the Jacobian matrix, which is given by
\[
\varDelta(a)=(q-k)^{-2}[(2k-a)^2-8a(q-k)],
\]
we obtain two types of orbits (see figure 1 below). To compute the exponent $q_{JL}(k)$, we first solve the equation $\varDelta(a)=0$ and then choose the larger root of it. Replacing this root into the definition of $a$ in \eqref{def:a:Intro}, we obtain
\begin{equation}\label{f:Intro}
n-2k=f_k(q),
\end{equation}
where
\[
f_k(q)=\frac{4q}{q-k}+4\sqrt{\frac{q}{q-k}}+\frac{2k}{q-k}(k-1).
\]
Observe that, for $k=1$, $f_1(q)$ coincides with the function $f$ introduced in \cite{JoLu73}. Now we point out that the larger root of \eqref{f:Intro} is exactly $q_{JL}(k)$ defined in \eqref{Exp:critical:Intro} in the case $n>2k+8$.

\subsection*{}
\vspace{-1cm}
\begin{figure}[!ht]
\begin{center}
\includegraphics{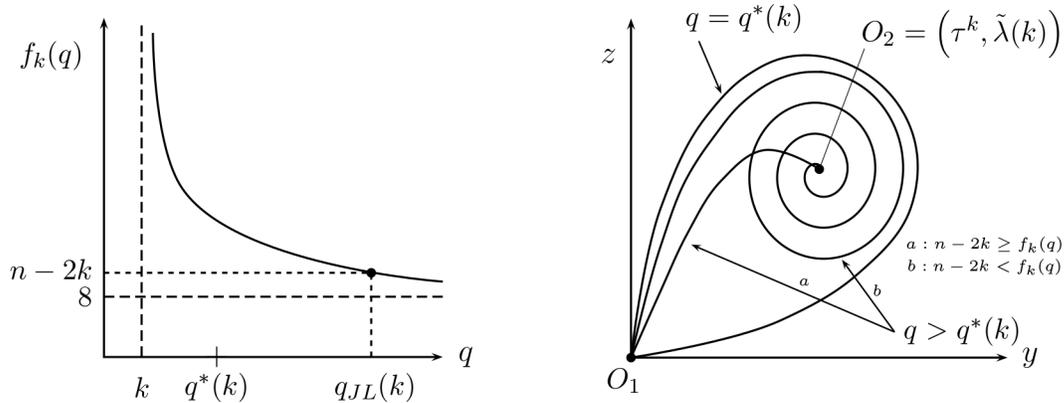}
\vspace{-0.4cm}
\caption{Phase plane analysis for $y$-$z$.}
\end{center}
\end{figure}
In figure 1, the graph of $f_k(q)$ shows that we have two possibilities: ($a$) $n-2k\geq f_k(q)$ if $q\geq q_{JL}(k)$ or ($b$) $n-2k< f_k(q)$ if $q<q_{JL}(k)$. Further, we see that $f_k(q)$ decays asymptotically to the horizontal line $8$, which ensures the existence of a unique positive number $q_{JL}(k)$ whenever $n-2k>8$, by \eqref{f:Intro}. On the other hand, we see in the phase plane two heteroclinic trajectories connecting the critical points $O_1$ to $O_2$ of system \eqref{Eq:Dyn:Intro}. The homoclinic trajectory corresponds to the usual critical exponent $q=q^*(k)$ which is analysed in our second result, see Theorem \ref{Main:2:Intro} below.

\medbreak

Finally, the number of solutions to ($P_\lambda$) is in exact correspondence with the number of intersections between a suitable horizontal line and the orbits  (a spiral or stable node) of system \eqref{Eq:Dyn:Intro}. From this, we can construct a unique, large finite or infinite number of solutions to ($P_\lambda$) which is achieved following basically the arguments in \cite{JoLu73}.

\medbreak

We now consider the critical exponent problem
\begin{equation}\label{Eq:critical}
\begin{cases}
S_k(D^2u)= \lambda(1- u)^{q^*(k)}&\mbox{in }\;\; B,\\
u <0 & \mbox{in }\;\; B,\\
u=0 &\mbox{on }\; \partial B,
\end{cases}
\end{equation}
with $q^*(k)$ as in \eqref{q:ast:k}. Since we are looking for radially symmetric solutions to \eqref{Eq:critical}, we introduce a family of Bliss functions
\begin{equation}\label{Eq:FS}
w_d(x) = -\frac{\left[d \binom{n}{k}^\frac{1}{k} \left(\frac{n-2k}{k}\right) \right]^{\frac{n-2k}{2(k+1)}}}{\left( 1 + d|x|^2\right)^{\frac{n-2k}{2k}}},\;\; x\in \RR^n,
\end{equation}
for $d>0$ (see \cite{Blis30} and \cite{ClFM96}) and then we write the solutions of \eqref{Eq:critical} in terms of the functions $w_d$ as in \eqref{Eq:FS}. For this, note that $w_d(\cdot)$ solves the problem
\begin{equation}\label{bliss:solution}
S_k(D^2u)= (- u)^{q^\ast(k)}\,\, \mbox{ in }\,\, \RR^n,
\end{equation}
for all $d>0$. Restricting the functions $w_d$ to the unit ball, we define:
\begin{equation}\label{Eq:wd:v}
v_{\lambda} = w_{d_-(\lambda)}|_{B}, \, V_{\lambda} = w_{d_+(\lambda)}|_{B},\, V^*=w_k|_{B},
\end{equation}
where $d_-(\lambda), \, d_+(\lambda)$ are the only two solutions of
\begin{equation}\label{Eq:d}
\lambda (d+1)^{k+1} - \binom{n}{k}\left(\frac{n-2k}{k}\right)^k d^k = 0,
\end{equation}
provided that $\lambda$ is less than or equal to $\frac{\binom{n}{k}(\frac{n-2k}{k})^k k^k}{(k+1)^{k+1}}$. When the parameter $\lambda$ is equal to this last value, we have only one solution $d=k$.

Now, we can state our second result.
\begin{theorem}\label{Main:2:Intro}
Let $n>2k$. Consider the functions $v_{\lambda}$, $V_{\lambda}$, and $V^*$  as in \eqref{Eq:wd:v}.  Set
\[
\mu^\ast(k) = \frac{\binom{n}{k}(\frac{n-2k}{k})^k k^k}{(k+1)^{k+1}}.
\]
\begin{itemize}
	\item[(i)] If $\lambda\in (0,\mu^\ast(k))$, then there exist exactly two solutions of \eqref{Eq:critical} given by
	\[
u_{\lambda}(x)=1-\lambda ^{-\frac{n-2k}{2k(k+1)}}(-v_{\lambda}(x)) ,\;\, U_{\lambda}(x)=	1-\lambda ^{-\frac{n-2k}{2k(k+1)}}(-V_{\lambda}(x)).
	\]
	\item[(ii)] If $\lambda = \mu^\ast(k)$, then \eqref{Eq:critical} has a unique solution given by
\[
u^\ast(x) = 1-(\mu^\ast(k))^{-\frac{n-2k}{2k(k+1)}} (-V^\ast(x)) = 1- \left(\frac{1+k}{1+k|x|^2}\right)^{\frac{n-2k}{2k}}.
\]		
\end{itemize}
\end{theorem}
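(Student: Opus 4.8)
The plan is to exploit the $k$-homogeneity of $S_k$ to convert the Bliss functions solving \eqref{bliss:solution} into solutions of \eqref{Eq:critical}, and then to let the boundary condition select the admissible parameters $d$. First I would verify the easy inclusion: for $d>0$ set $c=\lambda^{-\frac{n-2k}{2k(k+1)}}$ and $u=1+c\,w_d=1-c(-w_d)$. Since $D^2u=c\,D^2w_d$ with $c>0$, the $k$-homogeneity gives $S_k(D^2u)=c^k S_k(D^2w_d)=c^k(-w_d)^{q^*(k)}$ by \eqref{bliss:solution}, while $1-u=c(-w_d)$ yields $\lambda(1-u)^{q^*(k)}=\lambda c^{q^*(k)}(-w_d)^{q^*(k)}$. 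These agree precisely when $c^k=\lambda c^{q^*(k)}$, and the identity $k-q^*(k)=-\frac{2k(k+1)}{n-2k}$ shows this is exactly the definition of $c$. Thus the PDE holds for every $d>0$, independently of the boundary data.

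Second, I would impose $u=0$ on $\partial B$. Because $-w_d$ is radially decreasing, the condition $c(-w_d)=1$ at $|x|=1$ simultaneously forces $u<0$ inside $B$ (there $c(-w_d)>1$) and, after raising to the power $\frac{2k(k+1)}{n-2k}$ and using $[d\binom{n}{k}^{1/k}\frac{n-2k}{k}]^k=\binom{n}{k}(\frac{n-2k}{k})^k d^k$, reduces exactly to \eqref{Eq:d}. Hence the admissible $d$ are the roots of $\lambda=\binom{n}{k}(\frac{n-2k}{k})^k\,\Psi(d)$ with $\Psi(d)=d^k/(1+d)^{k+1}$. A one-variable computation gives $\Psi'(d)=d^{k-1}(k-d)/(1+d)^{k+2}$, so $\Psi$ increases on $(0,k)$, decreases on $(k,\infty)$, and attains its maximum $\Psi(k)=k^k/(k+1)^{k+1}$; thus the maximum of the right-hand side is exactly $\mu^*(k)$. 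This yields two roots $d_-\in(0,k)$, $d_+\in(k,\infty)$ when $\lambda<\mu^*(k)$, and the single root $d=k$ when $\lambda=\mu^*(k)$, producing $u_{\lambda},U_{\lambda}$ and $u^*$ respectively; each lies in $\Phi_0^k$ since $D^2u=c\,D^2w_d$ with $c>0$ and the Bliss functions are admissible, so $S_i(D^2u)=c^i S_i(D^2w_d)\ge0$. For part (ii), substituting $d=k$ and $\lambda=\mu^*(k)$ and collecting exponents shows $c\,[k\binom{n}{k}^{1/k}\frac{n-2k}{k}]^{\frac{n-2k}{2(k+1)}}=(1+k)^{\frac{n-2k}{2k}}$, which is exactly the closed form displayed for $u^*$.

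The delicate point is exactness (``exactly two'', ``unique''): I must rule out solutions that are not of Bliss type. The plan is to run the construction in reverse. Given a radial solution $u$ of \eqref{Eq:critical}, set $W=(u-1)/c<0$; the same homogeneity computation shows that $W$ solves the scale-invariant equation $S_k(D^2W)=(-W)^{q^*(k)}$ on $B$ with $W'(0)=0$. Invoking uniqueness for the singular radial initial value problem of the critical $k$-Hessian equation (as in \cite{ClFM96,ClMM98}) together with its scaling invariance, and using that $w_d(0)=-[d\binom{n}{k}^{1/k}\frac{n-2k}{k}]^{\frac{n-2k}{2(k+1)}}$ sweeps out $(-\infty,0)$ as $d$ ranges over $(0,\infty)$, I would identify $W$ with the unique Bliss profile matching $W(0)$, i.e.\ $W=w_d|_B$ for the $d$ with $w_d(0)=W(0)$. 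The boundary condition then forces this $d$ to satisfy \eqref{Eq:d}, so every radial solution is among those already constructed, and the count above is exact. I expect this uniqueness-of-the-IVP step to be the main obstacle, since it carries the entire classification and must be handled carefully at the singular origin $r=0$.
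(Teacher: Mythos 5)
Your proposal is correct, and its constructive half (using the $k$-homogeneity $S_k(cD^2w_d)=c^kS_k(D^2w_d)$ to reduce the boundary condition to \eqref{Eq:d}, then counting the roots of $\lambda=\binom{n}{k}(\frac{n-2k}{k})^k\,d^k/(1+d)^{k+1}$ via one-variable calculus) is essentially the paper's own argument, carried out in slightly more detail. Where you genuinely diverge is the exactness step. The paper rescales $u$ to $v$ solving $S_k(D^2v)=(-v)^{q^*(k)}$, continues $v$ radially past $r=1$ on its maximal interval of negativity, shows $v'>0$ there, and excludes vanishing at a finite radius $R$ because $v$ would then be a nontrivial solution of the critical Dirichlet problem in $B_R$, contradicting Tso's Pohozaev-type nonexistence theorem \cite[Proposition 1]{Tso90}; the resulting entire negative solution is then identified with a Bliss function via the classification in \cite{ClFM96}. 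You instead never leave the unit ball: you match the given solution to the unique Bliss profile with the same value at the origin and appeal to uniqueness of the singular radial initial value problem. This is a legitimate and arguably more self-contained route, since it needs neither Tso's theorem nor the entire-solution classification; its cost is precisely the ingredient you flagged, and there you should be careful with attribution: by the paper's own account, \cite[Theorem 4.1]{ClMM98} supplies \emph{existence} of the singular IVP solution, while uniqueness at the origin $r=0$ requires a supplementary contraction-mapping argument --- exactly what this paper proves later as Lemma~\ref{Uniq:sol}, valid for all $q\geq q^*(k)$ and hence at criticality, and covering arbitrary initial values $A<0$ after rescaling. Granting that lemma, together with the elementary fact that $d\mapsto w_d(0)$ is a strictly decreasing bijection of $(0,\infty)$ onto $(-\infty,0)$, your identification $W=w_d|_B$ goes through and yields the same count of solutions as the paper.
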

We note that the value $\mu^*(k)$, hereafter denoted by $\mu^*$, is computed by the rescaling $-v = \lambda^{\frac{n-2k}{2k(k+1)}}(1-u)$. By applying the boundary condition on $v$, and noting that $v$ is a Bliss function by \cite[Lemma 5.2]{ClFM96}, we obtain $\mu^*$. Notice that, for $q=q^*(k)$, we have the estimate $\lambda^* \geq \mu^*$ by Theorem \ref{Main:3:Intro} (see below). We mention that, in the case $k=1$, the value $\mu{^\ast}=\frac{n(n-2)}{4}$ coincides with the extremal value obtained in the classical paper \cite{JoLu73}. See also \cite{GaMa02} and \cite{Isse09}. We point out that, if any solution of the problem
\begin{equation}\label{Rn:1:Intro}
S_k(D^2u)=(-u)^{q^\ast(k)}, \, u<0\,\, \mbox{ in }\,\,\RR^n
\end{equation}
is radially symmetric around the origin, then we have $\lambda^* = \mu^*$. The study of radially symmetric solutions to \eqref{Rn:1:Intro} in $\RR^n$ seems not to have been discussed in the literature, except if we restrict to a ball, see \cite{Bran13, Tso90}. However, for the Laplacian operator ($k=1$) and for the $p$-Laplacian operator, there exist well-known references about radially symmetric solutions to the corresponding problems, see e.g. \cite{Broc97, CaGS89, ChLi91, DMMS14, GiNN81}.

\medbreak

Our third result deals with the existence of a finite positive number $\lambda^\ast$, as in \eqref{lambda-ast}, such that problem $(P_{\lambda})$ has either negative maximal bounded solutions if $\lambda\in (0,\lambda^\ast)$, at least one integral (or weak) solution (possibly unbounded) when $\lambda=\lambda^\ast$, or no classical solutions when $\lambda>\lambda^\ast$. More precisely,
\begin{theorem}\label{Main:3:Intro}
Let $n > 2k$ and $q > k$. Then there exists $\lambda^\ast>0$ such that:
\begin{itemize}
	\item[(a)] If $\lambda \in (0, \lambda^\ast)$, then $(P_{\lambda})$ admits a maximal bounded solution.
	\item[(b)] If $\lambda=\lambda^\ast$, then $(P_{\lambda})$ admits at least one possible unbounded integral solution.
	\item[(c)] If $\lambda > \lambda^\ast$, then $(P_{\lambda})$ admits no classical solutions.
\end{itemize}
\end{theorem}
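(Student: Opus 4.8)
The plan is to treat $(P_\lambda)$ as a Gelfand-type problem and to show that the admissible set $\Lambda:=\{\lambda>0:\,(P_\lambda)\text{ has a classical solution}\}$ is a nonempty bounded interval of the form $(0,\lambda^\ast)$, with $\lambda^\ast=\sup\Lambda$ as in \eqref{lambda-ast}. The three structural ingredients are the method of super- and subsolutions together with the comparison principle for the radial $k$-Hessian (available from \cite{Wang94}); the monotonicity of the source $(1-u)^q$ in $-u$; and the strict superlinearity $q>k$ relative to the $k$-homogeneity of $S_k$, which is what ultimately forces $\lambda^\ast<\infty$.

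First I would prove $\lambda^\ast>0$ and the interval structure. Recall that $u\equiv0$ is a supersolution. For a subsolution at small $\lambda$ I would take the explicit radial solution $w_\mu$ of $S_k(D^2w)=\mu$ in $B$, $w=0$ on $\partial B$, namely $w_\mu(r)=\tfrac12\big(\tfrac{\mu}{nc_{n,k}}\big)^{1/k}(r^2-1)$; choosing $\mu=2\lambda$ one checks $S_k(D^2 w_\mu)=\mu\ge\lambda(1-w_\mu)^q$ for all $\lambda$ small, since $1-w_\mu\le 1+\tfrac12(\tfrac{2\lambda}{nc_{n,k}})^{1/k}\to1$. As $w_\mu\le0$, the super/subsolution theorem yields a solution, so $\lambda^\ast>0$. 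The interval property is standard: if $u_{\lambda_0}$ solves $(P_{\lambda_0})$ and $0<\lambda<\lambda_0$, then $S_k(D^2u_{\lambda_0})=\lambda_0(1-u_{\lambda_0})^q\ge\lambda(1-u_{\lambda_0})^q$, so $u_{\lambda_0}$ is a subsolution at $\lambda$, and together with the supersolution $0$ it produces a solution of $(P_\lambda)$. For the finiteness $\lambda^\ast<\infty$ I would use a self-improving lower bound driven by $q>k$. Integrating the radial equation gives $c_{n,k}r^{n-k}(u'(r))^k=\lambda\int_0^r s^{n-1}(1-u)^q\,ds$, and $1-u\ge1$ yields $u'(r)\ge(\tfrac{\lambda}{nc_{n,k}})^{1/k}r$, hence $-u(\tfrac12)\ge c_0\lambda^{1/k}$ for an explicit $c_0>0$. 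Feeding a lower bound for $-u$ on $[0,\tfrac12]$ back into the integral on $[\tfrac12,1]$ and integrating $u'$ there produces a recursion $\beta_{j+1}\ge c\,\lambda^{1/k}\beta_j^{q/k}$ for numbers $\beta_j$ bounding $-u(\tfrac12)$ from below; because $q/k>1$, this escapes to $+\infty$ once $\lambda$ exceeds an explicit threshold, contradicting finiteness of $-u(\tfrac12)$ for a classical solution. The constants stay controlled because the relevant integrals are taken over $[\tfrac12,1]$, where the weight $r^{-(n-k)/k}$ is bounded.

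With $\lambda^\ast$ finite and positive, parts (a) and (c) follow quickly. For $\lambda\in(0,\lambda^\ast)$, starting the monotone iteration at the supersolution $\overline u_0=0$ and invoking the comparison principle yields a decreasing sequence converging to a solution $u_\lambda$ that dominates every subsolution, i.e. the maximal solution; it is bounded, being trapped between a subsolution and $0$. Part (c) is immediate from the definition of $\lambda^\ast$ as a supremum.

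The delicate point is part (b). Taking $\lambda_j\uparrow\lambda^\ast$ and the corresponding maximal solutions $u_{\lambda_j}$, monotonicity makes $u_{\lambda_j}$ decreasing, so $u^\ast:=\lim_j u_{\lambda_j}$ exists pointwise in $[-\infty,0]$, and passing to the limit in the integral identity is handled by monotone convergence since $1-u_{\lambda_j}$ increases. The only obstruction is to secure the integrability $\int_0^1 r^{n-k}(u^{\ast\prime})^{k+1}\,dr<\infty$ together with finiteness of $\int_0^1 s^{n-1}(1-u^\ast)^q\,ds$, while permitting $u^\ast$ to blow up at the origin. \textbf{The hard part} is exactly this uniform a priori bound: as in the classical case it should come from the \emph{stability} of the maximal solutions (nonnegativity of the linearized $k$-Hessian at $u_{\lambda_j}$), which, tested appropriately, bounds $\int_0^1 s^{n-1}(1-u_{\lambda_j})^q\,ds$ uniformly in $j$. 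Granting such a bound, $u^\ast$ is finite on $(0,1]$, absolutely continuous there, and satisfies the integral identity, i.e. it is an integral solution of $(P_{\lambda^\ast})$, completing the proof.
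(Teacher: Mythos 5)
Your proposal is correct, and at the structural level it is the same proof as the paper's: super/subsolutions with the comparison principle give both $\lambda^\ast>0$ and the interval property, the maximal solution in (a) comes from the monotone iteration $S_k(D^2u_i)=\lambda(1-u_{i-1})^q$ started at the supersolution $0$ (maximal precisely because the iteration never sees the subsolution), (b) is the monotone limit $\lambda_j\uparrow\lambda^\ast$ handled by the monotone convergence theorem, and (c) is the definition of the supremum. Two ingredients differ genuinely. For the finiteness of $\lambda^\ast$, the paper does not bootstrap: it invokes the Maclaurin-type inequality $\Delta u\geq C(n,k)\,[S_k(D^2u)]^{1/k}$ and pairs the resulting inequality $\Delta u\geq C(n,k)\lambda^{1/k}(-u)$ with the first Dirichlet eigenfunction of $-\Delta$, which yields the clean bound $\lambda^\ast\leq(\lambda_1/C(n,k))^k$ in one line. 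Your recursion $\beta_{j+1}\geq c\,\lambda^{1/k}\beta_j^{q/k}$, which escapes to infinity for large $\lambda$ because $q/k>1$, is sound (including the innocuousness of the weight $\tau^{(k-n)/k}$ on $[\tfrac12,1]$, and your starting bound $-u(\tfrac12)\geq c_0\lambda^{1/k}$); it is more elementary and stays entirely inside the radial ODE, at the cost of a messier threshold. Likewise your explicit paraboloid $w_\mu(r)=\tfrac12(\mu/(nc_{n,k}))^{1/k}(r^2-1)$ replaces the paper's torsion-type function on a larger ball $B_R$; both are routine. The second real difference is that, to produce a \emph{bounded} subsolution at level $\lambda<\lambda_0$, the paper runs the convexity device of \cite{BCMR96} (its Lemma~\ref{BCMR96} and Lemma~\ref{max:sol}: $\Phi=\tilde h^{-1}\circ h$ built from $g=[\lambda_0(1+t)^q]^{1/k}$ and $\tilde g=[\lambda(1+t)^q]^{1/k}$, bounded because $q>k$), whereas you simply use the classical solution $u_{\lambda_0}$ itself. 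Since classical solutions belong to $C^1([0,1])$ (hence are bounded), your shortcut is legitimate for the statement as given; the paper's $\Phi$-trick is the more robust tool, since it would still manufacture bounded subsolutions from an unbounded solution at $\lambda_0$.

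Concerning what you single out as the hard part of (b): the paper does not prove any stability-based a priori estimate. Its proof of (b) is exactly your monotone-convergence passage and stops there; neither the a.e.\ finiteness of $u^\ast$ nor the integrability $\int_0^1 r^{n-k}(u^{\ast\prime}(r))^{k+1}\,dr<\infty$ demanded by the definition of integral solution is verified. The weak phrasing of the conclusion (``at least one possible unbounded integral solution''), together with the clause that the integral identity need hold only ``whenever the integral exists,'' is what the paper leans on. So the bound you defer is not a gap \emph{relative to the paper} --- your argument for (b) already contains everything the paper's does --- but your instinct is correct that a stability-type estimate on the maximal solutions $u_{\lambda_j}$ (as in the classical $k=1$ theory) is what would be needed to certify in full that $u^\ast$ lies in the integral-solution class.
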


Note that in the case $k=1$, i.e., for the Laplace operator, it is well-known that there exists a finite positive (extremal) parameter $\lambda^*$ such that problem (\ref{Eq:f:pol}) has a positive minimal classical solution $u_{\lambda}\in C^2(\overline\Omega)$ if $0<\lambda<\lambda^*$, while no solution exists, even in the weak sense, for $\lambda>\lambda^*$. (See \cite{BCMR96} and the references therein). The boundedness of solutions for the extremal value $\lambda^\ast$, which are so called {\it extremal solutions}, depends on the dimension $n$ and on critical values of the power $q$. In \cite{BrVa97} the aim was to study the properties of the extremal solutions of problem (\ref{Eq:f:pol}). Further, in this reference the main interest centered on the unbounded solutions (i.e. singular solutions or blow-up solutions). For $n>2$ and $q>\frac{n}{n-2}$ we have the explicit weak solution
\[
U(x)=|x|^{-\frac{2}{q-1}}-1
\]
corresponding to the parameter
\begin{equation*}
\lambda^{\sharp}=\frac{2}{q-1}\left(n-\frac{2q}{q-1}\right).
\end{equation*}
Thus problem (\ref{Eq:f}) admits at least one solution if $\lambda\leq\lambda^{\sharp}$ (see\cite{BrVa97}).

In our case, for $n>2k$ and $q>\frac{nk}{n-2k}$, we introduce the radial function $U$ defined by
\begin{equation}\label{singusolu}
U(x)=1-|x|^{-\frac{2k}{q-k}}\;\;\;\forall x\in B\setminus \{0\}.
\end{equation}
This function is an explicit integral solution of problem (\ref{Eq:f:pol} for more details), corresponding to the parameter
\begin{equation*}
\lambda = c_{n,k}\tilde{\lambda}(k),
\end{equation*}
provided that $q\geq q_{JL}(k)$ holds. Note that, for $k=1$, we have $c_{n,1}=1$ and $\tilde{\lambda}(1)=\lambda^{\sharp}$.

\section{Proof of Theorem \ref{Main:3:Intro}}
To prove the statement (a) we need two lemmas. The first lemma is a variant of \cite[Lemma 4]{BCMR96} tailored to our needs.
\begin{lemma}\label{BCMR96}
Let $g$ be a $C^1$ convex and nondecreasing function on $[0,\infty)$. Assume that $g(0)>0$ and set
\[
h(s) = \int_s^0 \frac{1}{g(-t)}\,dt, \, s\leq 0.
\]
Let $\tilde{g}$ be a $C^1$ positive function on $[0,\infty)$ such that $\tilde{g}\leq g$ and $\tilde{g}'\leq g'$. Now Set
\[
\tilde{h}(s) = \int_s^0 \frac{1}{\tilde{g}(-t)}\,dt, \, s\leq 0,
\]
and $\Phi(s) = \tilde{h}^{-1}(h(s))$ for all $s\leq 0$. Then
\begin{itemize}
	\item[(i)] $\Phi(0) = 0$ and $s \leq \Phi(s) \leq 0$ for all $s\leq 0$.
	\item[(ii)] If $\lim_{s\to -\infty}h(s)$ exists and $\tilde{g} \neq g$, then $\lim_{s\to -\infty}\Phi(s)$ exists as well.
    \item[(iii)] $\Phi$ is increasing, convex and $\Phi'(s) \leq 1$ for all $s\leq 0$
\end{itemize}
\end{lemma}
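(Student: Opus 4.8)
The plan is to base everything on the elementary observation that both $h$ and $\tilde h$ are strictly decreasing $C^1$ bijections of $(-\infty,0]$ onto their ranges. First I would record, via the fundamental theorem of calculus, that $h'(s)=-1/g(-s)$ and $\tilde h'(s)=-1/\tilde g(-s)$; since $g(0)>0$ and $g$ is nondecreasing we have $g\geq g(0)>0$ on $[0,\infty)$, so both integrals converge, both derivatives are strictly negative, and $h(0)=\tilde h(0)=0$ with $h(s),\tilde h(s)\geq 0$ for $s\leq 0$. The single comparison driving the whole lemma is then obtained by integrating $1/\tilde g\geq 1/g$ (the hypothesis $\tilde g\leq g$) over $[s,0]$, which gives $\tilde h(s)\geq h(s)\geq 0$ for every $s\leq 0$; in particular $h(s)$ lies between $\tilde h(0)=0$ and $\tilde h(s)$, so $\Phi(s)=\tilde h^{-1}(h(s))$ is well defined for all finite $s\leq 0$.

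For part (i) I would argue purely from monotonicity. First $\Phi(0)=\tilde h^{-1}(0)=0$. Since $h(s)\geq 0=\tilde h(0)$ and $\tilde h^{-1}$ is decreasing, $\Phi(s)=\tilde h^{-1}(h(s))\leq\tilde h^{-1}(\tilde h(0))=0$. Applying the same decreasing map $\tilde h^{-1}$ to the comparison $h(s)\leq\tilde h(s)$ reverses the inequality and yields $\Phi(s)\geq\tilde h^{-1}(\tilde h(s))=s$. This gives $s\leq\Phi(s)\leq 0$.

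I would then treat (iii) by direct differentiation. The inverse-function and chain rules give $\Phi'(s)=\tilde g(-\Phi(s))/g(-s)>0$, so $\Phi$ is increasing. The bound $\Phi'\leq 1$ follows from $\tilde g(-\Phi(s))\leq g(-\Phi(s))\leq g(-s)$, where the first step is $\tilde g\leq g$ and the second uses that $g$ is nondecreasing together with $0\leq -\Phi(s)\leq -s$ from (i). Differentiating once more and substituting $\Phi'(s)\,g(-s)=\tilde g(-\Phi(s))$ produces
\[
\Phi''(s)=\frac{\tilde g(-\Phi(s))\,[\,g'(-s)-\tilde g'(-\Phi(s))\,]}{g(-s)^2}.
\]
Convexity then reduces to $\tilde g'(-\Phi(s))\leq g'(-s)$: from $\tilde g'\leq g'$ we get $\tilde g'(-\Phi(s))\leq g'(-\Phi(s))$, and since $g$ is convex (so $g'$ is nondecreasing) and $-\Phi(s)\leq -s$ we get $g'(-\Phi(s))\leq g'(-s)$; chaining these gives $\Phi''\geq 0$.

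For (ii), note $h$ is decreasing in $s$, so as $s\to-\infty$ it increases to the finite value $L:=\lim_{s\to-\infty}h(s)$. The point is that $L$ lies strictly below the top of the range of $\tilde h$: indeed
\[
\lim_{s\to-\infty}\big(\tilde h(s)-h(s)\big)=\int_{-\infty}^{0}\Big(\frac{1}{\tilde g(-t)}-\frac{1}{g(-t)}\Big)\,dt>0,
\]
since the integrand is nonnegative and, because $\tilde g\neq g$, strictly positive on a set of positive measure. Hence $\tilde h(-\infty)>L$, so $L$ sits in the interior of the range of the continuous decreasing bijection $\tilde h$, where $\tilde h^{-1}$ is continuous; therefore $\Phi(s)\to\tilde h^{-1}(L)$, a finite limit. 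The main obstacle, and the only place the full strength of the hypotheses is needed, is the convexity in (iii): it is precisely the combination of $\tilde g'\leq g'$ (bounding $\tilde g'$ against $g'$ at the same point) with the convexity of $g$ (transporting that bound from the argument $-\Phi(s)$ to the larger argument $-s$) that forces the bracket in $\Phi''$ to be nonnegative; neither hypothesis alone suffices.
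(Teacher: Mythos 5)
Your proof is correct and follows essentially the same route as the paper's: part (i) comes from the monotonicity of $h$ and $\tilde h$ (together with the comparison $h\leq\tilde h$, which the paper leaves implicit), and part (iii) comes from the explicit formulas $\Phi'(s)=\tilde g(-\Phi(s))/g(-s)$ and $\Phi''$ combined with the chain $\tilde g'(-\Phi(s))\leq g'(-\Phi(s))\leq g'(-s)$, exactly as in the paper (whose printed inequality contains an obvious typo that your version corrects). The only added content is your integral-comparison argument showing $\lim_{s\to-\infty}\bigl(\tilde h(s)-h(s)\bigr)>0$ for part (ii), a step the paper dismisses as ``clear''; this is a welcome completion of the same approach rather than a different one.
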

\begin{proof}
Clearly $\Phi(0)=0$. Further, since $h$ and $\tilde{h}$ are decreasing functions, we have $s \leq \Phi(s) \leq 0$ for all $s\leq 0$. Thus (i) holds. Property (ii) is clear. To prove (iii), we have
\[
\Phi'(s)=\frac{\tilde{g}(-\Phi(s))}{g(-s)}>0,
\]
and
\[
\Phi''(s)=-\tilde{g(-\Phi(s))}\frac{\tilde{g}'(-\Phi(s))-g'(-s)}{g(-s)^2}\,\cdot
\]
Since $\tilde{g}(-\Phi(s)) \leq g'(-\Phi(s)) \leq g'(-s)$ it follows that $\Phi$ is a convex function on $(-\infty, 0]$, which completes the proof.
\end{proof}

\begin{lemma}\label{max:sol}
Let $n > 2k$, $q > k$ and $\lambda_0 >0$. Assume that there exists a classical solution of
\begin{equation}
\begin{cases}\label{Eq:2:0}
c_{n,k}r^{1-n}\left(r^{n-k}(w')^k \right)' = \lambda_0 (1-w)^q\,,\quad 0<r<1,\\
w  < 0 \,, \hspace{4.65cm} 0<r<1,\\
w'(0)  =0,\, w(1)=0, &
\end{cases}
\end{equation}

Then, for any $\lambda\in (0,\lambda_0)$, problem $(P_\lambda)$ has a maximal bounded solution. Moreover, the maximal solutions form a decreasing sequence as $\lambda$ increases.
\end{lemma}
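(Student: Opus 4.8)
The plan is to run a monotone iteration between the supersolution $u\equiv 0$ and a lower barrier provided by the given solution at $\lambda_0$. First I would observe that, since $\lambda<\lambda_0$ and $1-w\geq 1>0$, the solution $w$ of \eqref{Eq:2:0} obeys $c_{n,k}r^{1-n}(r^{n-k}(w')^k)'=\lambda_0(1-w)^q\geq\lambda(1-w)^q$, so that $w$ is a \emph{subsolution} of $(P_\lambda)$; together with the fact that $u\equiv 0$ is a supersolution and that $w\leq 0$, this yields the ordered pair $w\leq 0$ required to invoke the method of sub- and supersolutions, see \cite[Theorem 3.3]{Wang94}. The point worth stressing is that the hypothesis $\lambda<\lambda_0$ is precisely what turns $w$ into a lower barrier and prevents the iteration from diverging.

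Next I would make the solution operator explicit. For $g\geq 0$ the problem $c_{n,k}r^{1-n}(r^{n-k}(u')^k)'=g$, $u'(0)=0$, $u(1)=0$, has the unique solution $u=Tg$ given by $c_{n,k}r^{n-k}(u'(r))^k=\int_0^r s^{n-1}g(s)\,ds$ and $u(r)=-\int_r^1 u'(\rho)\,d\rho\leq 0$. Two facts drive the argument: $T$ is \emph{order-reversing} on nonnegative data, while the source $f_\lambda(s)=\lambda(1-s)^q$ is \emph{decreasing} in $s$ on $(-\infty,0]$. Composing them, the fixed-point map $\mathcal{T}(u):=T(f_\lambda(u))$ — whose fixed points are exactly the solutions of $(P_\lambda)$ — is \emph{order-preserving}. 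A direct computation gives $\mathcal{T}(0)=T(\lambda)\leq 0$, and the subsolution inequality for $w$ translates, after integration and taking $k$-th roots, into $w\leq\mathcal{T}(w)$.

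Then I would iterate: set $\overline{u}_0\equiv 0$ and $\overline{u}_{m+1}=\mathcal{T}(\overline{u}_m)$. Monotonicity of $\mathcal{T}$ produces, by induction, a decreasing sequence $\overline{u}_0\geq\overline{u}_1\geq\cdots$ pinned above the barrier, $w\leq\overline{u}_m\leq 0$ for all $m$, hence a pointwise limit $u_\lambda$ with $w\leq u_\lambda\leq 0$. Passing to the limit in $c_{n,k}r^{n-k}(\overline{u}_{m+1}'(r))^k=\lambda\int_0^r s^{n-1}(1-\overline{u}_m(s))^q\,ds$ by dominated convergence — legitimate because $1\leq 1-\overline{u}_m\leq 1-w$ with $w$ bounded — shows that $u_\lambda$ satisfies the same integral identity, and continuity of the right-hand side upgrades it to a classical solution; membership in $\Phi_0^k$ for the indices $i<k$ follows automatically once $(r^{n-k}(u_\lambda')^k)'>0$ and $u_\lambda'\geq 0$ are known, and $u_\lambda<0$ on $[0,1)$ because the right-hand side is strictly positive. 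Maximality is then immediate: any subsolution $\underline{u}$ of $(P_\lambda)$ satisfies $\underline{u}\leq\mathcal{T}(\underline{u})$ and $\underline{u}\leq 0=\overline{u}_0$, so the same induction forces $\underline{u}\leq\overline{u}_m$ and hence $\underline{u}\leq u_\lambda$. Finally, for the ordering in $\lambda$, if $\lambda_1<\lambda_2<\lambda_0$ then $u_{\lambda_2}$ solves $(P_{\lambda_2})$ and so $S_k(D^2u_{\lambda_2})=\lambda_2(1-u_{\lambda_2})^q\geq\lambda_1(1-u_{\lambda_2})^q$, i.e.\ $u_{\lambda_2}$ is a subsolution of $(P_{\lambda_1})$, whence $u_{\lambda_2}\leq u_{\lambda_1}$ by the maximality of $u_{\lambda_1}$.

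The main obstacle I anticipate is not the algebra of the iteration but the passage to the limit and the confirmation that $u_\lambda$ is a genuine \emph{classical} (and bounded) solution rather than merely an integral one. This is exactly where the uniform control $1-\overline{u}_m\leq 1-w$ coming from the $\lambda_0$-barrier is indispensable, and where I would argue carefully near $r=0$, using the asymptotics $u_\lambda'(r)\sim Cr$ read off from the integral representation to recover both $u_\lambda'(0)=0$ and boundedness at the origin.
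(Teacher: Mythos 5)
Your proof is correct, and it reaches the conclusion by a more self-contained route than the paper's. The paper does \emph{not} use $w$ itself as the lower barrier: it first builds a modified subsolution $\Phi(w)$, where $\Phi=\tilde h^{-1}\circ h$ comes from the BCMR-type comparison Lemma~\ref{BCMR96}, verifies $S_k(D^2\Phi(w))\geq \lambda(1-\Phi(w))^q$ by a direct computation, invokes the sub/supersolution theorem \cite[Theorem 3.3]{Wang94} to obtain some solution between $\Phi(w)$ and $0$, and only then runs the monotone iteration $S_k(D^2u_i)=\lambda(1-u_{i-1})^q$, $u_i=0$ on $\partial B$, controlled by the comparison principle of \cite{TrXu99}, to produce the maximal solution. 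Your observation that $w$ is already a bounded subsolution of $(P_\lambda)$ — trivially, since $\lambda(1-w)^q\leq\lambda_0(1-w)^q$ and $w$ is classical, hence bounded — is sound and makes the $\Phi$-machinery superfluous under the stated hypothesis; what the $\Phi$-construction buys is robustness: it would still produce a \emph{bounded} subsolution if the solution at $\lambda_0$ were only a possibly unbounded integral solution, which is the setting of the original lemma in \cite{BCMR96}. Your iteration $\overline u_{m+1}=\mathcal{T}(\overline u_m)$ from $\overline u_0\equiv 0$ is exactly the paper's sequence $u_i$, but you implement it with the explicit radial integral operator and establish order-preservation, convergence, regularity and maximality by hand, where the paper quotes \cite{Wang94} and \cite{TrXu99}; this keeps the whole argument at the ODE level, at the cost of having to check admissibility of the limit yourself. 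On that point, your claim that $i$-admissibility for $i<k$ is ``automatic'' is true but deserves its one-line verification: since $(r^{n-i}(u')^i)'=r^{n-i-1}(u')^{i-1}\left[(n-i)u'+iru''\right]$, the hypotheses $u'\geq 0$ and $(n-k)u'+kru''\geq 0$ give $(n-i)u'+iru''\geq \frac{n(k-i)}{k}\,u'\geq 0$ for $i\leq k$. The maximality argument (the iteration from $0$ dominates every subsolution and is independent of it) and the monotonicity in $\lambda$ coincide with the paper's.
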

\begin{proof}
Fix $\lambda\in (0, \lambda_0)$ and define the functions
\[
g(t) = \left[\lambda_0(1+t)^q\right]^{1/k} \text{ and } \tilde{g}(t) = \left[\lambda(1+t)^q\right]^{1/k},\text{ for all } t\geq 0.
\]
Set $\Phi(s) = \tilde{h}^{-1}(h(s))$ ($s\leq 0$) with $h$ and $\tilde{h}$ as in Lemma \ref{BCMR96}. Since $q>k$, we have that $\lim_{s\to -\infty}h(s)$ exists and hence $\Phi$ is bounded by Lemma \ref{BCMR96} (ii). Next, by \eqref{Eq:2:0} and Lemma \ref{BCMR96} we have
\begin{eqnarray*}
S_k(D^2 \Phi(w))&=&c_{n,k}kr^{1-k}(\Phi'(w)w')^{k-1}\left(\Phi''(w)(w')^2+\Phi'(w)w''+\frac{n-k}{k}\frac{\Phi'(w)w'}{r}\right)\\
&\geq & c_{n,k}kr^{1-k}(\Phi'(w))^k(w')^{k-1}\left(w''+\frac{n-k}{k}\frac{w'}{r}\right)\\
& = &(\Phi'(w))^k S_k(D^2 w) = \frac{(\tilde{g}(-\Phi(w)))^k}{(g(-w))^k}S_k(D^2 w) = \lambda(1-\Phi(w))^q.
\end{eqnarray*}
Therefore,  $\Phi(w)$ is a bounded subsolution of $(P_\lambda)$ and hence by the method of super and subsolutions we have, by \cite[Theorem 3.3]{Wang94}, a solution $u\in L^{\infty}((0,1))$ of $(P_\lambda)$ with $\Phi(w) \leq u \leq 0$. Now, to prove that ($P_{\lambda}$) admits a maximal solution we consider $u_1$ as a solution of
\begin{equation*}
\begin{cases}
S_k(D^2 u_1)= \lambda &\mbox{in }\;\; B,\\
u_1=0 &\mbox{on }\; \partial B.
\end{cases}
\end{equation*}
Since $u$ is in particular a subsolution of ($P_{\lambda}$), we have $u\leq u_1$ on $B$ by the comparison principle, see \cite{TrXu99}. Next, we define $u_i$ ($i=2,3,\ldots$) as a solution of
\begin{equation*}
\begin{cases}
S_k(D^2 u_i)= \lambda(1-u_{i-1})^q &\mbox{in }\;\; B,\\
u_i=0 &\mbox{on }\; \partial B.
\end{cases}
\end{equation*}
Using again the comparison principle we obtain a decreasing sequence of $u_i$ bounded from below by $u$ and by 0 from above. Hence, we can pass to the limit and we obtain a solution $u_{max}$ of ($P_{\lambda}$), which is maximal since the recursive sequence $\{u_i\}$ does not depend on the subsolution $u$. Now, let $\lambda_1<\lambda_2$ and $u_{\lambda_1}$, $u_{\lambda_2}$ be maximal solutions of $(P_{\lambda_i})$ ($i=1,2$), respectively. Note that $u_{\lambda_2}$ is a subsolution of $(P_{\lambda_1})$, whence $u_{\lambda_2} \leq u_{\lambda_1}$ by the maximality of $u_{\lambda_1}$.
\end{proof}

Now, we show that $\lambda^\ast$ finite and positive. For $R>1$, let $B_R$ be a ball centered at zero with radius $R$ such that $\overline{B}\subset B_R$ and let $\eta$ be the solution of
\begin{equation*}
\begin{cases}
S_k(D^2\eta)= 1&\mbox{in }\;\; B_R,\\
\eta=0 &\mbox{on }\; \partial B_R.
\end{cases}
\end{equation*}
Then there exists a negative constant $\beta$ such that $\eta<\beta<0$ on $\partial{B}$.
Set $M=max_{x\in\overline{B}}\,|\eta|$ and take $\lambda<(1+M)^{-q}$. Then
\[
S_k(D^2\eta)=1>\lambda(1+M)^q\geq\lambda(1-\eta)^q\;\;\mbox{in}\;\;B.
\]
By \cite[Theorem 3.3]{Wang94}, for any $\lambda\in (0,(1+M)^{-q})$ there exists a solution $u_{\lambda}$ of $(P_\lambda)$. Hence $\lambda^\ast>0$.
To see that $\lambda^\ast$ is finite we consider the inequality
\begin{equation}\label{Eq:Nineq}
\Delta u\geq C(n,k)[S_k(D^2 u)]^\frac{1}{k},\; u\in \Phi^k(B).
\end{equation}
See e.g. \cite{Wang09}. Let $\lambda_1$ be the first eigenvalue of $-\Delta$ with zero Dirichlet boundary condition. Let $\lambda\in (0,\lambda^\ast)$ and let $u$ be a solution of problem $(P_\lambda)$. Then, using \eqref{Eq:Nineq}, we obtain
\[
\Delta u\geq C(n,k)[\lambda(1-u)^q]^\frac{1}{k}=C(n,k)\lambda^{\frac{1}{k}}(1-u)(1-u)^{\frac{q-k}{k}}>C(n,k)\lambda^{\frac{1}{k}}(1-u)>\lambda^{\frac{1}{k}}C(n,k)(-u),
\]
which in turn implies $\lambda<\left(\frac{\lambda_1}{C(n,k)}\right)^k$. Thus $\lambda^{\ast}$ is finite.

Now, let $\lambda\in (0,\lambda^{\ast})$. Then $u_\lambda$ is a maximal bounded solution of ($P_{\lambda}$) by Lemma \ref{max:sol} applied with $\lambda_0 \in (\lambda, \lambda^{\ast})$. This proves assertion (a).

\medbreak

Now let $\lambda_i$ be an increasing sequence such that $\lambda_i\rightarrow \lambda^\ast$ as $i\rightarrow +\infty$ and let $u_{\lambda_i}$ be a maximal solution of $(P_{\lambda_i})$. By Lemma \ref{max:sol}, for all $r\in [0,1]$, we have $u_{\lambda_{i+1}}(r)\leq u_{\lambda_{i}}(r)\leq 0$. On the other hand, integrating the equation in $(P_{\lambda_i})$, we obtain
\[
u_{\lambda_i}(r)=\int_{r}^{1}\left[c_{n,k}^{-1}\tau^{k-n}\int_{0}^{\tau}s^{n-1}\lambda_{i}(1-u_{\lambda_i}(s))^q\,ds\right]^{\frac{1}{k}}d\tau.
\]
Now, applying twice the monotone convergence theorem we conclude that
\[
u^\ast(r):=\lim_{i\rightarrow +\infty}u_{\lambda_i}(r), \;\;\; \text{ exists a.a. } r\in  (0,1)
\]
and
\[
u^\ast(r)=\int_{r}^{1}\left[c_{n,k}^{-1}\tau^{k-n}\int_{0}^{\tau}s^{n-1}\lambda^\ast(1-u^\ast(s))^q\,ds\right]^{\frac{1}{k}}d\tau, \;\;\; \text{ a.a. } r\in  (0,1).
\]
This proves assertion (b).

Assertion (c) follows immediately by definition of $\lambda^\ast$.

\section{Proof of Theorem \ref{Main:2:Intro}}

The rescaled function $v=-\lambda^{\frac{n-2k}{2k((k+1))}}(1-u)$ solves the problem
\begin{equation}\label{Eq:critical:v}
\begin{cases}
S_k(D^2v)= (- v)^{q^\ast(k)}&\mbox{in }\;\; B,\\
v = -\lambda^{\frac{n-2k}{2k((k+1))}} &\mbox{on }\; \partial B,\\
v > -\lambda^{\frac{n-2k}{2k((k+1))}} &\mbox{in }\;  B.
\end{cases}
\end{equation}
In particular, in radial coordinates $v=v(r)$, the indicates function solves the ordinary differential equation
\begin{equation}\label{Eq:radial:critical}
c_{n,k}\left(r^{n-k}(v')^k \right)' = r^{n-1}(-v)^{q^\ast(k)}
\end{equation}
with the boundary conditions $v'(0) = 0$ and $v(1) = -\lambda^{\frac{n-2k}{2k(k+1)}}$. Now define $(0,R)$, with $0<R\leq \infty$, as the maximal continuation interval for $v(r)$ under the restriction
\[
v(r)<0\, \text{ for all } r\in (0,R).
\]
Further, after integration of \eqref{Eq:radial:critical} over $(0,r)$, with $r\in (0,R)$, we obtain
\begin{equation}\label{v:prima:critical}
v'(r) >0\, \text{ for all } r\in (0,R).
\end{equation}
We claim that $R=\infty$. Suppose by contradiction that $R<\infty$. Since $v$ is increasing by \eqref{v:prima:critical} and bounded from above by $0$, we have
\[
v(R) =\lim_{r\to R} v(r) = 0,
\]
by the maximality of $R$. Therefore $v$ is a nontrivial solution of
\begin{equation*}
\begin{cases}
S_k(D^2v)= (- v)^{q^\ast(k)}&\mbox{in }\;\; B_R,\\
v = 0 &\mbox{on }\; \partial B_R,
\end{cases}
\end{equation*}
which contradicts Tso's nonexistence result, see \cite[Proposition 1]{Tso90}.

Next, after scaling
\[
s=\left(c_{n,k}\right)^{-\frac{1}{2k}}r, \,\, r>0
\]
we can rewrite \eqref{Eq:radial:critical} as
\begin{equation}\label{Eq:resc:v}
\left(s^{n-k}(v')^k \right)' = s^{n-1}(-v)^{q^\ast(k)}, \, s>0.
\end{equation}
Choosing $\alpha=n-k$, $\beta=k-1$, and $\gamma=n-1$ in \cite[Equation (1.12)]{ClFM96} we conclude that \eqref{Eq:resc:v} admits a unique solution of the form
\begin{equation}\label{Eq:FS:1}
w_d(x) = -\frac{\left[d \binom{n}{k}^\frac{1}{k} \left(\frac{n-2k}{k}\right) \right]^{\frac{n-2k}{2(k+1)}}}{\left( 1 + d|x|^2\right)^{\frac{n-2k}{2k}}},\,\, x\in \RR^n,
\end{equation}
for $d>0$ (see \cite{Blis30} and \cite{ClFM96}). Now we can write the solutions of \eqref{Eq:critical:v} in terms of the functions $w_d$ as \eqref{Eq:FS:1}. More precisely, it is easy to see that $w_d$ is a solution of \eqref{Eq:critical:v} if, and only if, the there exists a value $d>0$ such that
\begin{equation}\label{Eq:d}
\lambda (d+1)^{k+1} - \binom{n}{k}\left(\frac{n-2k}{k}\right)^k d^k = 0.
\end{equation}
We can now verify by elementary calculus that \eqref{Eq:d} has either a unique solution $d = k$ if $\lambda = \mu^*$, exactly two solutions $d_-(\lambda) <d_+(\lambda)$ if $0<\lambda < \mu^*$ and no solutions $d$ if $\lambda > \mu^*$. Hence problem \eqref{Eq:critical:v} has a solution if, and only if, $\lambda\leq \mu^*$.

If $\lambda< \mu^*$, there exist exactly two solutions of \eqref{Eq:critical:v} given by
\begin{equation*}
v_{\lambda} = w_{d_-(\lambda)}|_{B}, \, V_{\lambda} = w_{d_{+}(\lambda)}|_{B}.
\end{equation*}
On the other hand, if $\lambda = \mu^*$, we have a unique solution of \eqref{Eq:critical:v} given by
\begin{equation*}
V^*=w_k|_{B}.
\end{equation*}

Therefore, by the rescaled function $v$, we conclude that problem \eqref{Eq:critical} has exactly two solutions $u_\lambda$, $U_\lambda$ if $\lambda< \mu^*$ and a unique solution $u^*$ if $\lambda=\mu^*$, where
	\[
u_{\lambda}(x)=1-\lambda ^{-\frac{n-2k}{2k(k+1)}}(-v_{\lambda}(x)) ,\;\, U_{\lambda}(x)=	1-\lambda ^{-\frac{n-2k}{2k(k+1)}}(-V_{\lambda}(x))
	\]
and
\[
u^\ast(x) = 1-(\mu^\ast)^{-\frac{n-2k}{2k(k+1)}} (-V^\ast(x)) = 1- \left(\frac{1+k}{1+k|x|^2}\right)^{\frac{n-2k}{2k}}.
\]		
The proof is now complete.

\section{Proof of Theorem \ref{Main:1:Thm1}}

Let $n>2k$ and $q>q^*(k)$. Set $A=u(0) < 0$ and introduce the nonnegative variable
\begin{equation}\label{def:s:r}
s=\left(\frac{\lambda(1-A)^{q-k}}{c_{n,k}\tilde{\lambda}(k)} \right)^{\frac{1}{2k}}r,\,\: \text{for all }r\geq 0
\end{equation}
with $\tilde{\lambda}(k)$ as in \eqref{Lambda:Tilde:Intro}. Hereafter $\tilde{\lambda}(k)$ is denoted by $\tilde{\lambda}$.
Note that $\tilde{\lambda}>0$ since $q > q^*(k)$. Define the rescaled function $v(s)$ as
\begin{equation}\label{def:v:s}
v(s) = -\frac{1-u}{1-A}.
\end{equation}
We note that, by $(P_{\lambda})$ and \eqref{def:v:s}, $v$ solves the initial value problem
\begin{equation}\label{Eq:v}
\begin{cases}
\left(s^{n-k}(v')^k\right)'= \tilde{\lambda} s^{n-1}(-v)^q, & s>0,\\
v(0) = -1,\\
v'(0) =0.
\end{cases}
\end{equation}

\begin{lemma}\label{Uniq:sol}
Let $q\geq q^\ast(k)$. Then there exists a unique global solution $v$ of \eqref{Eq:v} in the regularity class $C^2(0,\infty)\cap C^1[0,\infty)$.
\end{lemma}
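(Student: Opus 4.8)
The plan is to recast \eqref{Eq:v} as a fixed-point equation, solve it locally near the singular endpoint $s=0$, and then continue the local solution to all of $(0,\infty)$ by combining an a priori monotonicity bound with Tso's nonexistence theorem. Integrating the equation in \eqref{Eq:v} once and using $v'(0)=0$ gives $s^{n-k}(v'(s))^k=\tilde{\lambda}\int_0^s\sigma^{n-1}(-v(\sigma))^q\,d\sigma$, and since the right-hand side is nonnegative we solve for the admissible branch $v'\ge 0$ and integrate again to reach the equivalent integral equation
\[
v(s)=-1+\int_0^s\left(\tilde{\lambda}\,t^{k-n}\int_0^t\sigma^{n-1}(-v(\sigma))^q\,d\sigma\right)^{1/k}dt=:(Tv)(s).
\]
A solution of \eqref{Eq:v} in the stated class is exactly a fixed point of $T$ that remains negative.

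For local existence and uniqueness I would run the contraction mapping principle for $T$ on a small ball around the constant $-1$ in $C([0,\delta])$. The point is that near $s=0$ one has $v\approx-1$, so $(-v)^q$ is bounded, stays bounded away from $0$, and is Lipschitz in $v$ on this range. Writing $v=-1+\phi$ with $|\phi|\le 1/2$, the inner integral satisfies $\int_0^t\sigma^{n-1}(-v)^q\,d\sigma\ge c\,t^n$, so the base of the outer $1/k$-power is pinched between constant multiples of $t^k$. Consequently, although $x\mapsto x^{1/k}$ is not Lipschitz at $0$ when $k>1$, the mean value estimate for the difference of the integrands of $Tv_1$ and $Tv_2$ produces a factor $\xi^{(1-k)/k}\le C\,t^{1-k}$ which exactly cancels the weights $t^{k-n}$ and $t^{n}$, leaving a pointwise bound $\lesssim t\,\|v_1-v_2\|_\infty$; integrating in $t$ yields $\|Tv_1-Tv_2\|_{C([0,\delta])}\le C\delta^2\|v_1-v_2\|_{C([0,\delta])}$, a contraction for $\delta$ small. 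The same estimate shows $T$ maps a small ball into itself. (Alternatively one may quote the local theory for radial operators of this type developed in \cite{ClFM96}.) The explicit formula for $v'$ gives $v'(0)=0$ and $v\in C^1([0,\delta])$, and a bootstrap of the equation gives $v\in C^2((0,\delta])$.

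For the global continuation, note that $v'\ge 0$ as long as $v<0$, so the local solution is nondecreasing and trapped in $[-1,0)$; hence $(-v)^q\in(0,1]$ stays bounded and the solution cannot blow up. For $s>0$ the equation is nondegenerate: rewriting it as the first-order system $v'=(s^{k-n}w)^{1/k}$, $w'=\tilde{\lambda}\,s^{n-1}(-v)^q$ with $w=s^{n-k}(v')^k>0$, the right-hand side is $C^1$ wherever $w>0$ and $v<0$, so standard ODE theory yields a unique maximal solution on some interval $(0,R)$. Suppose $R<\infty$. By monotonicity and boundedness the limit $L=\lim_{s\to R^-}v(s)$ exists with $-1\le L\le 0$. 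If $L<0$ then $w(R)>0$ and the system extends past $R$, contradicting maximality; hence $L=0$ and $v$ is a nontrivial solution of $S_k(D^2v)=c_{n,k}\tilde{\lambda}(-v)^q$ on $B_R$ with $v=0$ on $\partial B_R$. Since $q\ge q^\ast(k)$, this contradicts Tso's nonexistence result \cite[Proposition 1]{Tso90} (after the trivial rescaling absorbing the constant $c_{n,k}\tilde{\lambda}$). Therefore $R=\infty$, and $v$ is the unique global solution in $C^2(0,\infty)\cap C^1[0,\infty)$.

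The main obstacle is precisely the local analysis at $s=0$, where the weight $s^{k-n}$ is singular and the outer nonlinearity $x\mapsto x^{1/k}$ fails to be Lipschitz at the origin for $k>1$. The key observation that rescues the argument is the exact matching of the powers $t^{k-n}$, $t^{n}$ and $\xi^{(1-k)/k}\sim t^{1-k}$: the would-be divergent Lipschitz constant is absorbed and the contraction factor is rendered harmless, of order $\delta^2$. Once this is in place, the interior regularity, the monotone a priori trapping in $[-1,0)$, and Tso's nonexistence theorem combine routinely to give global existence and uniqueness.
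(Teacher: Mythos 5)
Your proposal is correct, and its local half is essentially the paper's own argument: the authors also recast \eqref{Eq:v} as a fixed-point equation for the same operator $T$ on the ball $\{w\in C[0,t_0]:\ w(0)=-1,\ |w+1|\le 1/2\}$, use the lower bound $h_i(t)\ge c\,t^k$ on the inner average together with the algebraic inequality $|a^{1/k}-b^{1/k}|\le \frac{2}{a^{(k-1)/k}+b^{(k-1)/k}}\,|a-b|$ (their substitute for your mean-value estimate) to produce exactly your cancellation factor $t^{1-k}$, and arrive at the same contraction constant of order $r^2$. Where you genuinely diverge is the global part. The paper does not invoke Tso in Lemma~\ref{Uniq:sol}: it rescales $\tau=\tilde{\lambda}^{\frac{1}{2k}}s$ and verifies the sign condition $B(r)\le 0$ for $r>0$ (equivalent to $q\ge q^*(k)$) required by \cite[Theorem 4.1]{ClMM98}, importing global existence of a negative solution from that reference, and then propagates uniqueness by standard continuation; it also leans on \cite{ClMM98} for the self-map property of $T$, which you instead check directly. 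Your continuation argument---monotone trapping of $v$ in $[-1,0)$, nondegeneracy of the first-order system for $s>0$, and exclusion of the alternative $v(R)=0$ at finite $R$ via Tso's nonexistence theorem after absorbing the constant $c_{n,k}\tilde{\lambda}$ by $k$-homogeneity---is precisely the argument the paper uses in the proof of Theorem~\ref{Main:2:Intro} to show $R=\infty$ in the critical case; you have observed that it works verbatim for all $q\ge q^*(k)$, since Tso's Pohozaev-type result covers the supercritical range as well. What each route buys: the paper's is shorter where it counts (the condition $B(r)\le 0$ is a one-line computation) but outsources global existence to \cite{ClMM98}; yours removes that dependence and unifies the lemma with the continuation step of Theorem~\ref{Main:2:Intro}, at the mild cost of needing the extension of $v$ to $\overline{B_R}$ to be an admissible solution in the sense required by \cite[Proposition 1]{Tso90}---a point you, like the paper in its own use of this argument, leave implicit.
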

\begin{proof}
Let $\tau = \tilde{\lambda}^{\frac{1}{2k}}s$ and  set $w(\tau) = v(s)$. Then we may rewrite \eqref{Eq:v} as
\begin{equation}\label{Eq:w:aux}
\begin{cases}
\left(\tau^{n-k}(w')^k\right)'= \tau^{n-1}(-w)^q, & \tau>0,\\
w(0) = -1,\\
w'(0) =0.
\end{cases}
\end{equation}
Defining $B(r)=\int_0^r s^{\frac{(\alpha-\beta)(q+1)}{\beta+1}}(a(s)s^{\theta})'ds$, $r>0$, with $\alpha=n-k$, $\beta=k$, $\gamma=n-1$, $a(s)\equiv 1$ and $\theta=[(\gamma+1)(\beta+1)-(\alpha-\beta)(q+1)]/(\beta +1)$, we obtain $B(r)\leq 0$ for $r>0$ if, and only if, $q\geq q^*(k)$. Then the global existence of \eqref{Eq:w:aux} follows from \cite[Theorem 4.1]{ClMM98}. The uniqueness follows from the contraction mapping theorem as we show next. We use the notation given in \cite[Theorem 4.1]{ClMM98} and define the map $T: \mathcal{B} \to  \mathcal{B}$, where $ \mathcal{B}:=\{ w\in C[0,t_0]\;:\; w(0)=-1 \text{ and } |w+1| \leq 1/2\}$, by
\[
T(w)(r):= -1 + \int_0^r \left( \frac{1}{t^{n-k}}\int_0^t s^{n-1}(-w(s))^q ds\right)^{\frac{1}{k}}dt,\; r\in [0,t_0].
\]
Using the arguments given in the proof of \cite[Theorem 4.1]{ClMM98}, we see that the map $T$ satisfies $T(\mathcal{B}) \subset \mathcal{B}$ and admits a fixed point. We show now that $T$ is a contraction. To this end, let $w_1, w_2 \in \mathcal{B}$ be fixed, and define
\[
h_i(t)=\frac{1}{t^{n-k}}\int_0^t s^{n-1}(-w_i(s))^q ds, \; i=1,2.
\]
It is easy to see that
\[
h_i(t) \geq \left(\frac{3}{2}\right)^q \frac{t^k}{n}, \; i=1,2.
\]
On the other hand, from the inequality
\[
|a^{\frac{1}{k}} - b^{\frac{1}{k}}| \leq \frac{2}{a^{\frac{k-1}{k}} + b^{\frac{k-1}{k}}}|a-b|, \; a,b >0,
\]
we conclude that there exists a constant $c>0$ such that
\[
|h_1(t)^{\frac{1}{k}} - h_2(t)^{\frac{1}{k}}| \leq c \, t^{1-k} |h_1(t)-h_2(t)|,\;\; \text{ for all}\;\; t \in (0,r).
\]

Hence we have the estimate
\begin{align*}
|T(w_1)(r) - T(w_2)(r)| &\leq c \int_0^r t^{1-k}|h_1(t)-h_2(t)| dt\\
& \leq \tilde{c} r^2 \sup_{s\in [0,t_0]}|w_1(s)-w_2(s)|,
\end{align*}
since
\[
|(-w_1(s))^q - (-w_2(s))^q | \leq c_0 |w_1(s) - w_2(s)|
\]
for some positive constant $c_0$ and for all $s\in [0,t_0]$. We choose $r$ such that $\tilde{c} r^2 <1$, which ensures the existence of a unique local solution. Using standard arguments of continuation on the maximal existence interval, we deduce the existence of a unique global solution on $[0,\infty)$.
\end{proof}

Next, we introduce a dynamical system associated to \eqref{Eq:v} by means of the following change of variables of Emden-Fowler type:
\begin{equation}\label{EF:var}
s=e^t, \text{ for all } s>0
\end{equation}
and the introduction of the functions
\begin{equation}\label{y-z:def}
y(t) = \left(\frac{dv}{dt}\right)^k e^{\frac{2k^2}{q-k}t} \;\;\;\text{and}\;\;\; z(t) = \tilde{\lambda} e^{\frac{2kq}{q-k}t} (-v(e^t))^q.
\end{equation}
After a straightforward (and lengthy) computation, we conclude that the pair $(y(t), z(t))$ satisfies the following system of differential equations
\begin{equation}\label{Eq:Dyn}
\begin{cases}
\frac{dy}{dt}= z(t) - \frac{a}{q-k} y(t),\\
\,\\
\frac{dz}{dt} = \frac{2kq}{q-k} z(t) - q\tilde{\lambda}^{\frac{1}{q}}y(t)^{\frac{1}{k}}z(t)^{1-\frac{1}{q}},\\
\end{cases}
\end{equation}
where $a$ is defined as in \eqref{def:a:Intro}. Moreover, the function $z = z(t)$ satisfies
\begin{equation}\label{lim:z}
\lim_{t\to -\infty}z(t)e^{-\frac{2kq}{q-k}t} = \tilde{\lambda}
\end{equation}
by \eqref{Eq:v},  \eqref{EF:var} and \eqref{y-z:def}. This in turn implies that $z$ vanishes as $t\to -\infty$. Analogously, we can obtain, by \eqref{Eq:v},  \eqref{EF:var} and \eqref{y-z:def} that
\begin{equation}\label{lim:y:0}
\lim_{t\to -\infty}y(t) = 0.
\end{equation}

\begin{remark} For $q\geq q^*(k)$, the unique global solution of \eqref{Eq:v} corresponds, by Lemma \ref{Uniq:sol}, to a global solution $(y,z)$ of \eqref{Eq:Dyn}-\eqref{lim:z}. On the other hand, a global solution $(y,z)$ of \eqref{Eq:Dyn}-\eqref{lim:z} defines, by Lemma \ref{Uniq:sol} and reversing the definitions of \eqref{EF:var}-\eqref{y-z:def}, a unique solution $v$ of \eqref{Eq:v}. Thus we have a one-to-one correspondence between solutions of \eqref{Eq:v} and solutions of \eqref{Eq:Dyn}-\eqref{lim:z}.
\end{remark}

\begin{lemma}
Let $q\geq q^*(k)$. The unique global solution $v$ of \eqref{Eq:v} is globally bounded if, and only if, $\lim_{t\to -\infty} (y(t), z(t))=(0,0)$.
\end{lemma}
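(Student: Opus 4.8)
The plan is to invert the Emden--Fowler substitution \eqref{EF:var}--\eqref{y-z:def} so as to recover $v$ and its $t$--derivative directly from the orbit $(y,z)$. Integrating \eqref{Eq:v} once gives $s^{n-k}(v')^k=\tilde{\lambda}\int_0^s\sigma^{n-1}(-v)^q\,d\sigma>0$, so $v'>0$ and $-v$ is positive and nonincreasing on $(0,\infty)$. Consequently $\sup_{s>0}(-v(s))=\lim_{s\to0^+}(-v(s))$, and \emph{global boundedness of $v$ is equivalent to the finiteness of this limit}, i.e. to the boundedness of $-v$ as $t\to-\infty$. Taking positive roots in \eqref{y-z:def} (legitimate since $v'>0$) one has
\[
-v(t)=\Big(\tfrac{z(t)}{\tilde{\lambda}}\Big)^{1/q}e^{-\frac{2k}{q-k}t},\qquad \frac{dv}{dt}(t)=y(t)^{1/k}e^{-\frac{2k}{q-k}t},
\]
and the whole proof consists in reading boundedness of the first expression off the behaviour of $(y,z)$ at $-\infty$.

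For the implication $\Rightarrow$ I would assume $-v\le M$ on $(0,\infty)$. The first identity above gives $(z/\tilde{\lambda})^{1/q}=(-v)\,e^{\frac{2k}{q-k}t}\le M e^{\frac{2k}{q-k}t}\to0$, hence $z(t)\to0$. For $y$, the bound $-v\le M$ turns the integrated equation into $s^{n-k}(v')^k\le \tfrac{\tilde{\lambda}M^q}{n}s^{n}$, so $v'\le Cs$ with $C=(\tilde{\lambda}M^q/n)^{1/k}$; since $dv/dt=s\,v'\le Cs^2$, we get $y=(s v')^k s^{\frac{2k^2}{q-k}}\le C^k s^{\,2k+\frac{2k^2}{q-k}}\to0$ as $s\to0^+$. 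Thus $\lim_{t\to-\infty}(y,z)=(0,0)$.

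For the converse I would use the local structure of \eqref{Eq:Dyn} at the origin $O_1=(0,0)$. Its leading linear part is governed by $\left(\begin{smallmatrix} -\frac{a}{q-k} & 1 \\ 0 & \frac{2kq}{q-k} \end{smallmatrix}\right)$, whose eigenvalues are $\mu_1=-\frac{a}{q-k}$ and $\mu_2=\frac{2kq}{q-k}$. Since $q\ge q^\ast(k)$ is equivalent to $a\ge2k>0$, one has $\mu_1<0<\mu_2$, so $O_1$ is a hyperbolic saddle. An orbit with $(y,z)\to O_1$ as $t\to-\infty$ can only approach along the one--dimensional unstable direction, whose eigenvector has strictly positive components; hence $z(t)\sim C e^{\mu_2 t}=C e^{\frac{2kq}{q-k}t}$ for some $C>0$. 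Substituting into the first displayed identity yields $-v(t)\to(C/\tilde{\lambda})^{1/q}<\infty$, so $-v$ is bounded as $t\to-\infty$ and $v$ is globally bounded. As a consistency check, \eqref{lim:z} gives precisely $z\sim\tilde{\lambda}e^{\frac{2kq}{q-k}t}$, i.e. $C=\tilde{\lambda}$ and $-v\to1$, matching $v(0)=-1$.

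The delicate step is justifying the rate $z\sim C e^{\mu_2 t}$: the field in \eqref{Eq:Dyn} fails to be $C^1$ at $O_1$ because of the term $y^{1/k}z^{1-1/q}$, so the usual unstable--manifold theorem does not apply verbatim. Here the hypothesis $q>k$ is exactly what rescues the argument, since it forces $y^{1/k}z^{1-1/q}=o(|(y,z)|)$ on the relevant regime $y\sim z$, making that term a genuine higher--order perturbation of the linear saddle, so the exponential approach rate $\mu_2$ persists. I expect this rate estimate to be the main obstacle; an alternative that sidesteps invariant manifolds is to integrate the second identity above, $-v(t)=\int_t^{+\infty}y(\sigma)^{1/k}e^{-\frac{2k}{q-k}\sigma}\,d\sigma$, and to deduce from the decay of $y$ near $-\infty$ that this integral converges there. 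Everything else --- the monotonicity of $-v$ and the explicit inversion formulas --- is routine.
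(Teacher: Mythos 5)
Your ``only if'' half is fine and coincides with the paper's (the paper simply reads $\lim_{t\to-\infty}(y,z)=(0,0)$ off \eqref{lim:z}--\eqref{lim:y:0}, exactly as you read it off the substitution formulas). The genuine gap is in the ``if'' half, and it is precisely the step you flag as delicate and then leave unproved. Everything hinges on the borderline decay rate $z(t)\le C e^{\frac{2kq}{q-k}t}$ as $t\to-\infty$: since $-v(e^{t})=\bigl(z(t)/\tilde{\lambda}\bigr)^{1/q}e^{-\frac{2k}{q-k}t}$ and $\frac{1}{q}\cdot\frac{2kq}{q-k}=\frac{2k}{q-k}$, a rate with any loss, say $z(t)\le Ce^{(\frac{2kq}{q-k}-\varepsilon)t}$, only gives $-v(e^{t})\le C'e^{-\varepsilon t/q}\to\infty$. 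So nothing short of the sharp exponent suffices, and neither of your two routes delivers it. The saddle-point argument is not applicable as stated: the right-hand side of \eqref{Eq:Dyn} is not $C^1$, in fact not even Lipschitz, in any neighborhood of $O_1$ (for every $k$, $\partial_z\bigl(y^{1/k}z^{1-1/q}\bigr)\sim y^{1/k}z^{-1/q}$ blows up as $z\to0^+$ with $y>0$ fixed; for $k\ge2$ the factor $y^{1/k}$ is moreover only H\"older in $y$), so the unstable-manifold/Hartman--Grobman machinery that would give the exact rate $e^{\frac{2kq}{q-k}t}$ does not apply; and your restriction to ``the relevant regime $y\sim z$'' is circular, because the assertion that the orbit approaches $O_1$ inside such a cone is itself part of what must be proved. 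The fallback via $-v$ as an integral of $y^{1/k}e^{-\frac{2k}{q-k}\sigma}$ has the same defect: convergence of that integral near $-\infty$ requires a quantitative bound $y(\sigma)\lesssim e^{\beta\sigma}$ with $\beta>\frac{2k^2}{q-k}$, which does not follow from $y\to 0$ alone.

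What the paper does instead --- and what your write-up is missing --- is an elementary scalar differential inequality that produces the sharp rate without any smoothness of the field at $O_1$. Near $t=-\infty$ the orbit lies in the region where $y'>0$ and $z'>0$; the inequality $y'>0$ in \eqref{Eq:Dyn} means $y<\frac{q-k}{a}\,z$, and inserting this bound into the equation for $z'$ yields \eqref{dz:1}, namely $z'>\frac{2kq}{q-k}\,z-q\tilde{\lambda}^{1/q}\bigl(\frac{q-k}{a}\bigr)^{1/k}z^{1+\frac{q-k}{qk}}$, a Bernoulli-type inequality in $z$ alone, where the hypothesis $q>k$ enters exactly to make the exponent $1+\frac{q-k}{qk}$ strictly larger than $1$. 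Integrating it (substitute $\eta=z^{-\frac{q-k}{qk}}$) gives $z(t)\le Ce^{\frac{2kq}{q-k}t}$ with no loss, hence $0>v(s)\ge-(C/\tilde{\lambda})^{1/q}$ for small $s$, and global boundedness then follows from the monotonicity of $v$. Replacing your invariant-manifold paragraph by this differential-inequality argument is what closes the gap.
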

\begin{proof}
We have already shown the necessity of the condition by \eqref{lim:z}-\eqref{lim:y:0}. Now assume that
\[
\lim_{t\to -\infty} (y(t), z(t))=(0,0).
\]
From the analysis of the field directions to \eqref{Eq:Dyn} we see that, for $t$ small enough, $y'>0$ and $z'>0$, which may be used to estimate $y(t)$ and $z(t)$. To this end, we combine the inequalities arising from $y'>0$ and $z'>0$ and obtain
\begin{equation}\label{dz:1}
\frac{dz(t)}{dt} >\frac{2kq}{q-k}z(t) - q\tilde{\lambda}^{\frac{1}{q}}\left(\frac{q-k}{a}\right)^{\frac{1}{k}}z(t)^{1+\frac{q-k}{qk}} >0, \text{ for all } t<\bar{t},
\end{equation}
where $\bar{t}<0$ is chosen small enough. Next, by \eqref{dz:1}, we may integrate over $(t,\bar{t})$ the quotient
\[
\int_t^{\bar{t}} \frac{z(\tau)^{-1-\frac{q-k}{qk}}z'(\tau)}{\frac{2kq}{q-k}z(\tau)^{-\frac{q-k}{qk}} - q\tilde{\lambda}^{\frac{1}{q}}\left(\frac{q-k}{a}\right)^{\frac{1}{k}}}\,d\tau >\bar{t}-t >0.
\]
After the change of variable $\eta(\tau)=\frac{2kq}{q-k}z(\tau)^{-\frac{q-k}{qk}} - q\tilde{\lambda}^{\frac{1}{q}}\left(\frac{q-k}{a}\right)^{\frac{1}{k}}$, we obtain the estimate
\[
z(t) \leq Ce^{\frac{2kq}{q(k+1)-k} t}\leq Ce^{\frac{2kq}{q-k} t}.
\]
Thus, by definition of $z$ in \eqref{y-z:def}, we have
\[
0>v(s) \geq -\left(\frac{C}{\tilde{\lambda}}\right)^{\frac{1}{q}}, \text{ for all } s\in (0,e^{\bar{t}}).
\]
Since $v$ is increasing by the equation in \eqref{Eq:v}, we obtain that $v$ is globally bounded. The initial condition $v'(0)=0$ follows by  L'Hospital's rule and the equality $v(0)=-1$ follows from \eqref{lim:z}.
\end{proof}

The dynamical system \eqref{Eq:Dyn} has exactly two stationary points in the phase plane $(y,z)$. We denote these two points by $O_1$ and $O_2$, where
\[
O_1= (0,0)\;\;\;\text{ and }\;\;\; O_2 =\left(\frac{q-k}{a}\tilde{\lambda}, \tilde{\lambda}\right).
\]
Now we consider the linearization at the equilibrium point $O_2$ of \eqref{Eq:Dyn}. The Jacobian matrix at $O_2$ is given by
\[ J=
\left(\begin{array}{cc}
\frac{-a}{q-k}& 1\\
\,\\
\frac{-2qa}{(q-k)^2} & \frac{2k}{q-k}
\end{array}
\right).
\]
The eigenvalues of $J$ are
\[
\lambda_{\pm} =\frac{1}{2}\, \text{tr} J \pm \frac{1}{2} \sqrt{(\text{tr} J)^2 - 4 \text{det} J}
\]
where $\text{tr} J = \frac{2k-a}{q-k}$ and $\text{det} J = \frac{2a}{q-k}$. The discriminant is given by
\[
\varDelta(a)=(q-k)^{-2}[(2k-a)^2-8a(q-k)].
\]
The location on the complex plane of the eigenvalues $\lambda_{\pm}$ is determined as follows:
\begin{itemize}
	\item[(i)] If $2k-a>0$ and $\varDelta >0$, then the eigenvalues $\lambda_{\pm}$ are real positive numbers.
	\item[(ii)] If $2k-a>0$ and $\varDelta <0$, then the eigenvalues $\lambda_{\pm}$ are complex numbers with positive real part.
	\item[(iii)] If $2k-a<0$ and $\varDelta <0$, then the eigenvalues $\lambda_{\pm}$ are complex numbers with negative real part.
	\item[(iv)] If $2k-a<0$ and $\varDelta >0$, then the eigenvalues $\lambda_{\pm}$ are negative real numbers.
	\item[(v)] If $2k-a=0$, then the eigenvalues $\lambda_{\pm}$ are purely imaginary.
\end{itemize}

We observe that the cases (i) and (ii) can be ignored since $2k-a>0$ is equivalent to $q<q^{\ast}(k)$. The condition $2k-a=0$ in (v) is equivalent to setting $q =q^*(k)$, and this case was already discussed in Theorem \ref{Main:2:Intro}. Additionally, we note that the orbit $(y(t),z(t))$ of \eqref{Eq:Dyn}-\eqref{lim:z} in the critical case $q =q^*(k)$ convergences to $O_1$ after one loop around $O_2$. Indeed, let $v$ the unique solution of \eqref{Eq:v}. Then, using Tso's nonexistence result and the arguments in the proof of Theorem \ref{Main:2:Intro}, we conclude that $v$ solves the problem
\begin{equation*}
\begin{cases}
S_k(D^2u)= \tilde{\lambda}(-v)^{q^*(k)} &\mbox{in }\;\; \RR^n,\\
v(0)= -1. &
\end{cases}
\end{equation*}
Then, after rescaling $w=\tilde{\lambda}^{\frac{n-2k}{2k(k+1)}} v$, we see that $w$ solves the problem \eqref{bliss:solution} for some $d$. The initial condition $v(0)=-1$ yields the existence of three values of $d$ as in the proof of Theorem \ref{Main:2:Intro}. Since $w$ is a Bliss function, we can compute explicitly $v$, which is given by
\[
v(s)=-\left(1+ds^2\right)^{-\frac{n-2k}{2k}}.
\]
Hence, by the change of variable \eqref{EF:var} and \eqref{y-z:def}, we have
\begin{align*}
y(t) & =\left[\frac{(n-2k)d}{k}\right]^ke^{\frac{(n+2)k}{k+1}t}\left(1+de^{2t}\right)^{-\frac{n}{2}}\\
z(t) & = \tilde{\lambda}e^{\frac{(n+2)k}{k+1}t} \left(1+de^{2t}\right)^{-\frac{n+2}{2}}.
\end{align*}
Therefore, $\lim_{t\to +\infty}(y(t),z(t)) = O_1$ since $n>2k$.

\begin{lemma}\label{Asym:z:1}
Let $n>2k$ and $q\geq q^*(k)$. Consider the unique solution $(y(t),z(t))$ of \eqref{Eq:Dyn}-\eqref{lim:z}. Then
\[
z(t) = ny(t) + o(y(t)), \, \text{ as } t\to -\infty.
\]
\end{lemma}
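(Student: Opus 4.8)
The plan is to upgrade the two limits already recorded in \eqref{lim:z}--\eqref{lim:y:0} into matching first-order asymptotics for $z$ and $y$, and then simply take their ratio. Writing $\mu := \frac{2kq}{q-k}$, relation \eqref{lim:z} says precisely that $z(t) = \tilde{\lambda}\, e^{\mu t}(1+o(1))$ as $t\to-\infty$, so the whole argument reduces to extracting the leading behaviour of $y$ from the first equation of \eqref{Eq:Dyn}, namely the linear ODE $y' = z - \frac{a}{q-k}y$.

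First I would record the algebraic identity on which everything hinges: since $a = q(n-2k)-nk$ by \eqref{def:a:Intro}, one checks that $2kq + a = n(q-k)$, equivalently
\[
\frac{a}{q-k} + \mu = n, \qquad\text{so that}\qquad n - \frac{a}{q-k} = \mu .
\]
Note also that $q\ge q^\ast(k)$ forces $2k-a\le 0$, hence $a\ge 2k>0$; this positivity of $a/(q-k)$ is exactly what makes the integration below converge at $-\infty$.

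Next I would solve the linear equation by the integrating factor $e^{\frac{a}{q-k}t}$, obtaining $\bigl(e^{\frac{a}{q-k}t}y\bigr)' = e^{\frac{a}{q-k}t}z$. Integrating from $-\infty$ to $t$ and using $a>0$ together with \eqref{lim:y:0} to kill the boundary term $\lim_{\tau\to-\infty}e^{\frac{a}{q-k}\tau}y(\tau)=0$, I get
\[
e^{\frac{a}{q-k}t}\,y(t) = \int_{-\infty}^t e^{\frac{a}{q-k}\sigma}\,z(\sigma)\,d\sigma .
\]
Substituting $z(\sigma)=\tilde{\lambda}\, e^{\mu\sigma}(1+o(1))$ and using $\frac{a}{q-k}+\mu=n>0$, the integrand is $\tilde{\lambda}\, e^{n\sigma}(1+o(1))$, which is integrable at $-\infty$; a standard $\varepsilon$-estimate (for small $\sigma$ bound the error part by $\varepsilon\,\tilde{\lambda}\, e^{n\sigma}$ and integrate, against the exact primitive $\frac{\tilde{\lambda}}{n}e^{n t}$) shows the integral equals $\frac{\tilde{\lambda}}{n}\,e^{nt}(1+o(1))$. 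Dividing by $e^{\frac{a}{q-k}t}$ and using $n-\frac{a}{q-k}=\mu$ yields $y(t)=\frac{\tilde{\lambda}}{n}\,e^{\mu t}(1+o(1))$.

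Finally, combining $z(t)=\tilde{\lambda}\, e^{\mu t}(1+o(1))$ with $y(t)=\frac{\tilde{\lambda}}{n}\, e^{\mu t}(1+o(1))$ gives $z(t)/y(t)\to n$, that is $z(t)=ny(t)+o(y(t))$ as $t\to-\infty$. The only delicate point is the asymptotic evaluation of the Duhamel integral, where one must ensure the $o(1)$ error in $z$ does not corrupt the leading term; the $\varepsilon$-estimate above handles this cleanly, the positivity $a>0$ being essential. Alternatively, one may bypass the phase plane entirely: integrating the equation in \eqref{Eq:v} gives $s^{n-k}(v'(s))^k = \tilde{\lambda}\int_0^s\sigma^{n-1}(-v)^q\,d\sigma \sim \frac{\tilde{\lambda}}{n}s^n$ as $s\to0^+$, whence $v'(s)\sim(\tilde{\lambda}/n)^{1/k}s$; feeding this into the definitions \eqref{y-z:def} of $y$ and $z$ reproduces the same two asymptotics directly.
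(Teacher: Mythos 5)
Your proof is correct, but your primary argument takes a genuinely different route from the paper's. The paper obtains the asymptotics of $y$ directly from the original equation: unwinding the change of variables gives $y(t)e^{-\frac{2kq}{q-k}t}=\bigl(v'(s)/s\bigr)^k$ with $s=e^t$, and the integrated form of \eqref{Eq:v}, namely $s^{n-k}(v'(s))^k=\tilde{\lambda}\int_0^s\sigma^{n-1}(-v)^q\,d\sigma$, yields $\lim_{s\to 0^+}(v'(s)/s)^k=\tilde{\lambda}/n$ by one application of L'Hospital's rule (this is precisely the ``alternative'' you sketch in your last sentence). Your main argument instead stays entirely inside the phase plane: you treat the first equation of \eqref{Eq:Dyn} as a linear ODE for $y$ with forcing $z$, integrate it by Duhamel's formula from $-\infty$ (the boundary term dying because $a\geq 2k>0$ and \eqref{lim:y:0}), and feed in the known asymptotics \eqref{lim:z} of $z$; the constant $n$ then emerges from the identity $2kq+a=n(q-k)$, which you correctly verify from \eqref{def:a:Intro}. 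Both proofs are sound and of comparable length. The paper's route is slightly more economical, since L'Hospital's rule applied to the integral equation for $v$ does the bookkeeping automatically; your route has the merit of being self-contained at the level of the dynamical system (it never returns to the variable $v$), of isolating the $\varepsilon$-estimate needed to pass an $o(1)$ error through an integral (a point the paper leaves implicit in ``L'Hospital''), and of making transparent why the coefficient is exactly $n$: it is the sum $\frac{a}{q-k}+\frac{2kq}{q-k}$ of the decay rate in the linear part and the growth rate of the forcing.
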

\begin{proof}
Let $z=z(t)$ and $v=v(t)$ be as in \eqref{y-z:def}. Then, by \eqref{lim:z}, we have
\begin{equation}\label{Asym:z}
z(t) \sim \tilde{\lambda} e^{\frac{2kq}{q-k}t},\, \text{ as } t\to -\infty.
\end{equation}
On the other hand, by \eqref{Eq:v}, \eqref{EF:var}, \eqref{y-z:def} and L'Hospital's rule, we deduce that
\[
\lim_{t\to -\infty}y(t)e^{-\frac{2kq}{q-k}t}= \frac{\tilde{\lambda}}{n},
\]
which in turn implies that
\begin{equation}\label{Asym:y}
y(t) \sim \frac{\tilde{\lambda}}{n}\,e^{\frac{2kq}{q-k}t},\, \text{ as } t\to -\infty.
\end{equation}
Thus the lemma follows immediately by combining \eqref{Asym:z} and \eqref{Asym:y}.
\end{proof}

\begin{lemma}\label{lem:qJL:nodo}
Let $n>2k+8$ and $q \geq q_{JL}(k)$. Then the unique solution $(y(t),z(t))$ \eqref{Eq:Dyn}-\eqref{lim:z} coincides with the graph of an increasing function $z = z(y)$ and
\begin{equation}\label{lim:y-z:o2}
\lim_{t\to +\infty} (y(t), z(t)) = O_2=:(y_2,z_2).
\end{equation}
\end{lemma}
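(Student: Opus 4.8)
Under the hypotheses the equilibrium $O_2=(y_2,z_2)$ is a \emph{stable node}: since $q_{JL}(k)>q^{\ast}(k)$ we are in the stability regime $2k-a<0$, and the requirement $q\ge q_{JL}(k)$ with $n>2k+8$ is exactly the condition $\varDelta(a)\ge0$ coming from \eqref{f:Intro}, so the Jacobian $J$ at $O_2$ has two negative real eigenvalues $\lambda_-\le\lambda_+<0$. I would begin by recording the geometry this forces. The two nullclines of \eqref{Eq:Dyn} are the line $L:\,z=\tfrac{a}{q-k}y$ (where $y'=0$) and the curve $C:\,z=\big(\tfrac{q-k}{2k}\big)^{q}\tilde\lambda\,y^{q/k}$ (where $z'=0$), and they meet only at $O_1$ and $O_2$. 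Writing the eigendirections of $J$ as lines through $O_2$ of slope $m_\pm=\lambda_\pm+\tfrac{a}{q-k}$, the relations $m_+m_-=\tfrac{2aq}{(q-k)^2}$, $m_++m_-=\tfrac{2k+a}{q-k}$, together with $(m_+-\tfrac{a}{q-k})(m_--\tfrac{a}{q-k})=\tfrac{2a}{q-k}>0$ (while their sum equals $\tfrac{2k-a}{q-k}<0$), give $0<m_-\le m_+<\tfrac{a}{q-k}$. Finally, by \lemref{Asym:z:1} and \eqref{lim:z}--\eqref{lim:y:0}, as $t\to-\infty$ the orbit leaves $O_1$ into the open first quadrant along the direction $z\sim ny$; since a direct computation gives $n-\tfrac{a}{q-k}=\tfrac{2qk}{q-k}>0$, it enters strictly above $L$, hence above both nullclines, in the region where $y'>0$ and $z'>0$.

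The plan is to trap the orbit in a triangle that pinches onto $O_2$. Fix a slope $\sigma\in(m_-,m_+)$ and let $R$ be the closed triangle with vertices $O_1$, $(0,z_2-\sigma y_2)$ and $O_2$, i.e.\ the set $\{0\le y\le y_2\}$ lying above $L$ and below the line $\ell:\,z=z_2+\sigma(y-y_2)$; because $\sigma<\tfrac{a}{q-k}$ the line $\ell$ stays above $L$ for $y<y_2$, so $R$ is nondegenerate, and the incoming orbit lies in its interior for $t$ near $-\infty$. On the lower edge $L$ one has $y'=0$ and $z'>0$ (since $L$ lies strictly above $C$ for $0<y<y_2$), so the field points up into $R$; on the left edge $\{y=0\}$ one has $y'=z>0$, pointing right into $R$. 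The crux is the slanted edge $\ell$, where I must show $\Phi:=z'-\sigma y'\le0$ so that the orbit cannot cross $\ell$ outward. The choice $\sigma\in(m_-,m_+)$ is dictated by the linearisation at $O_2$: there the coefficient $-(\sigma-m_+)(\sigma-m_-)$ is positive, whence $\Phi=-(\sigma-m_+)(\sigma-m_-)(y-y_2)+o(|y-y_2|)<0$ for $y$ just below $y_2$. Granting $\Phi\le0$ on all of $\ell$, the triangle $R$ is positively invariant, so the orbit stays in $R$, with $0<y\le y_2$, $0<z\le z_2$ and (being strictly above $L$, hence above $C$) with $y'>0$ and $z'>0$ at every finite $t$.

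Convergence and the graph property then follow at once. As $y(t)$ is increasing and bounded above by $y_2$ and $z(t)$ is increasing and bounded above by $z_2$, the orbit converges to a point of $R$, which (the velocity field being continuous) is necessarily an equilibrium; since $y(t)>0$ is increasing it cannot be $O_1$, so $(y(t),z(t))\to O_2$, which is \eqref{lim:y-z:o2}. Moreover $t\mapsto y(t)$ is a strictly increasing bijection of $(-\infty,\infty)$ onto $(0,y_2)$, so $z$ is a well-defined function of $y$ with $\tfrac{dz}{dy}=z'/y'>0$; hence the orbit is the graph of an increasing function $z=z(y)$, as claimed. The main obstacle is the global sign $\Phi\le0$ on $\ell$: the eigenvalue computation only yields it near $O_2$, and away from $O_2$ one must dominate the nonlinear term $q\tilde\lambda^{1/q}y^{1/k}z^{1-1/q}$ to preclude the orbit ``overshooting'' through $y=y_2$ at a height $z>z_2$. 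This is precisely the step that breaks down when $\varDelta(a)<0$, i.e.\ $q<q_{JL}(k)$: there the eigenvalues are complex, no real slopes $m_\pm$ exist, and the orbit spirals around $O_2$, crossing $L$ and $C$ infinitely often, so that no monotone graph can exist.
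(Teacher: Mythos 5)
Your setup is sound and is, in spirit, the same strategy as the paper's: both arguments exclude spiralling by erecting a barrier joining $O_1$ to $O_2$ and showing the orbit can never cross it. Your linear algebra is correct (your slopes $m_\pm$ are exactly the roots $\gamma_\pm$ of \eqref{gamma:def}), as are the nullcline geometry, the entry of the orbit above the line $z=\frac{a}{q-k}y$ via \lemref{Asym:z:1}, and the concluding steps (monotone bounded coordinates, convergence to an equilibrium that cannot be $O_1$, and the graph property from $y'>0$). But the proof has a hole at precisely the point where all the work of the lemma lies, and you say so yourself: the inequality $\Phi:=z'-\sigma y'\le 0$ along the \emph{whole} slanted edge $\ell$ is verified only in the linearized regime near $O_2$, and on the rest of $\ell$ you simply ``grant'' it. This is not a routine verification: away from $O_2$ the sign of $\Phi(y)=F_2(y,\ell(y))-\sigma F_1(y,\ell(y))$ is governed by the competition between the linear terms and the nonlinear term $q\tilde\lambda^{1/q}y^{1/k}\ell(y)^{1-1/q}$, and nothing in your argument controls it globally. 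The paper's proof exists essentially to overcome this one difficulty: instead of a chord it takes the power barrier $g(y)=cy^{\beta}$ with $\beta=\frac{2k+a}{2a}\in(\frac{1}{2},1)$ and slope $(\gamma_++\gamma_-)/2$ at $O_2$, precisely because then $h(y)=F_2(y,g(y))-g'(y)F_1(y,g(y))$ is a combination of powers of $y$ whose derivative factors as $h'(y)=y^{\beta-1}\rho(y)$ with $\rho$ vanishing at most twice on $(0,y_2]$; combined with the endpoint information $h(0)=h(y_2)=0$, $h'(y)\to-\infty$ as $y\to0^+$ and $h'(y_2)\ge0$, a crossing point (where $h(y_0)>0$) would force at least three critical points of $h$, a contradiction. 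A straight line affords no comparable structure, so your global sign claim would require an entirely new argument, not a finishing touch.

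There is a second, smaller but genuine, defect: when $q=q_{JL}(k)$ the discriminant vanishes, so $m_-=m_+$ and your interval $(m_-,m_+)$ of admissible slopes $\sigma$ is empty; taking $\sigma=m_\pm$ instead kills the leading coefficient $-(\sigma-m_+)(\sigma-m_-)$ in your local expansion, so even the behaviour near $O_2$ is no longer decided by the linearization. As written, your construction could at best cover $q>q_{JL}(k)$, whereas the lemma claims the full range $q\ge q_{JL}(k)$; the paper's proof has to track this borderline case separately (there $g'(y_2)=\gamma_+=\gamma_-$, $\rho(y_2)=0$ and $h'(y_2)=0$), and so would you.
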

\begin{proof}
We first describe the behavior of the orbit $(y(t),z(t))$ near $O_2$. To this end, we compute
\[
\gamma:=\lim_{t\to +\infty}\frac{z'(t)}{y'(t)}
\]
by L'Hospital's rule. The existence of the limit above is equivalent to solving the equation
\begin{equation}\label{gamma:def}
\gamma^2 -\frac{2k+a}{q-k}\gamma +\frac{2aq}{(q-k)^2}=0,
\end{equation}
whose roots are given by
\[
\gamma_{\pm} = \frac{2k+a}{2(q-k)}\pm \frac{1}{2(q-k)}\sqrt{(2k+a)^2-8aq}.
\]
Note that $(2k+a)^2-8aq \geq 0$ if, and only if, $q\geq q_{JL}(k)$. Thus, $\gamma_{\pm}$ are positive roots.

Next, consider a function $g$ defined by
\begin{equation}\label{Def:g}
g(y) = cy^{\beta}, \, y\geq 0,
\end{equation}
where the constants $c$ and $\beta$ are chosen such that the graph of $g$ connects the points $O_1$ and $O_2$. Using $z_2=g(y_2)$, we obtain $c=\tilde{\lambda} \left(\frac{2k}{q-k}\right)^{-\beta k}$. Now, setting $g'(y_2)=(\gamma_{+}+\gamma_{-})/2$, we have $\beta=\frac{2k+a}{2 a}$. Note that $\beta\in (1/2, 1)$ since $q\geq q_{JL}(k)$ and $2k-a=(n-2k)(q^*(k)-q)<0$.

We claim that the unique solution $(y(t), z(t))$ lies below $(y,g(y))$ when $y\in (0,y_2)$. Indeed, by Lemma \ref{Asym:z:1}, $(y(t), z(t))$ lies above the line $z=\frac{a}{q-k}\,y$ and below $(y,g(y))$ near $O_1$ on the phase plane. On the other hand, since the slope of the line $z=\frac{a}{q-k}\,y$ at $O_2$ is larger than $(\gamma_{+}+\gamma_{-})/2$ and the parabola in \eqref{gamma:def} is positive at $a/(q-k)$, we conclude that the orbit arrives at $O_2$ from above the line $z=\frac{a}{q-k}\,y$ near $O_2$.

Suppose now by contradiction that $(y(t), z(t))$ intersects the curve $(y,g(y))$ in a point $(y_0,z_0)$ at $t=t_0$. In this case we have two possibilities: the orbit  $(y(t), z(t))$ remains on the graph of $g$ for all $t\geq t_0$ and then $(y(t), z(t))$ arrives at $O_2$ with the same slope that $(y,g(y))$ at $y_2$, which is impossible since  $g'(y_2)=(\gamma_{+}+\gamma_{-})/2 \neq \gamma_{\pm}$ because the inequality $q>q_{JL}(k)$. The other case is that the orbit crosses the graph of $g$ at a point $y_0$. In this case we have
\begin{equation}\label{g:prima}
g'(y_0)< \frac{z'(t_0)}{y'(t_0)}.
\end{equation}
We now show that \eqref{g:prima} is impossible. To this end, define the functions
\begin{equation}\label{F_1:F_2}
\begin{cases}
F_1(y,z)= z - \frac{a}{q-k} y,\\
\,\\
F_2(y,z) = \frac{2kq}{q-k} z - q\tilde{\lambda}^{\frac{1}{q}}y^{\frac{1}{k}}z^{1-\frac{1}{q}},\\
\end{cases}
\end{equation}
for all $(y,z)\in [0,y_2]\times [0,z_2]$. Note that
\begin{equation}\label{F:at:y0}
F_2(y_0,g(y_0)) - g'(y_0)F_1(y_0,g(y_0)) = F_2(y_0,z_0) - g'(y_0)F_1(y_0,z_0) >0.
\end{equation}
Next, set
\begin{equation}\label{h:def:F}
h(y) = F_2(y,g(y)) - g'(y)F_1(y,g(y)), \, y\in (0,y_2).
\end{equation}
Then, by \eqref{Def:g} and \eqref{F_1:F_2}, we have
\[
h(y) = \frac{2kq}{q-k}\,cy^{\beta} -qc^{1-\frac{1}{q}}\tilde{\lambda}^{\frac{1}{q}} y^{\frac{1}{k}+\beta(1-\frac{1}{q})}-c^2\beta y^{2\beta -1}+\frac{a\beta c}{q-k}\,y^{\beta}, \, y\in (0,y_2).
\]
Using \eqref{F_1:F_2}, we obtain
\begin{equation}{\label{h:prima:y2}}
h'(y_2) = - [g'(y_2)]^2 + \frac{2k+a}{q-k}\,g'(y_2) - \frac{2aq}{(q-k)^2}\geq 0
\end{equation}
since $q\geq q_{JL}(k)$ and the inequality is strict if $q > q_{JL}(k)$. Now note that $h(0)=0$ since $\beta\in(1/2,1)$ and $h(y_2)=0$ by \eqref{lim:y-z:o2} and the equality $z_2=g(y_2)$. The derivative of $h$ may be written as
\begin{equation}\label{hprima}
h'(y)= y^{\beta-1}\rho(y),
\end{equation}
where the function $\rho$ is given by
\[
\rho(y)= \frac{\beta c}{q-k}\left(2kq+a\beta\right)  - q c^{1-1/q}\tilde{\lambda}^{1/q}\left[\frac{1}{k} + \beta\left(1-\frac{1}{q}\right)\right]y^{1/k - \beta/q} -\beta (2\beta-1)c^2y^{\beta-1}.
\]
Now, in order to determine the growth of $h$ around $0$, we rewrite $h'$ as $h'(y)=y^{1-\beta}\rho(y)/y^{2(\beta -1)}$ and conclude that
\begin{equation}\label{h:prima:0}
\lim_{y\to 0^+} h'(y) = -\infty,
\end{equation}
since $1/k - \beta/q >0$ and $\beta\in(1/2,1)$. Note that $\rho(y_2)\geq 0$ by \eqref{h:prima:y2}-\eqref{hprima}. In particular, $\rho(y_2)=0$ in the case $q=q_{JL}(k)$ by \eqref{h:def:F} and the equalities $g'(y_2)=\gamma_+=\gamma_- = \gamma$. On the other hand,  it is easy to see that
\begin{equation}\label{rho:0}
\lim_{y\to 0^+} \rho(y) = -\infty.
\end{equation}
Computing, $\rho'(y)$ we conclude that the equation $\rho'(y)=0$ admits only one solution which is not a minimum by \eqref{rho:0}. Further, the equation $\rho(y)=0$ has at most two solutions on $(0,y_2]$ since, if there is no solution, then $\rho<0$ on $(0,y_2]$ by \eqref{rho:0}. This would in turn imply that $h$ is decreasing on $(0,y_2]$ by \eqref{hprima}, which is a contradiction since $h(0)=h(y_2)=0$. Now, if we have two solutions on $(0,y_2)$ then $\rho(y_2)<0$, which is a contradiction since $\rho(y_2)\geq 0$. Hence the equation $\rho(y)=0$ admits at most two solutions on $(0,y_2]$. In this case we see that $h$ admits at most two critical points on $(0,y_2]$. But this is impossible since $h(y_0)>0$ by \eqref{F:at:y0}-\eqref{h:def:F} and \eqref{h:prima:y2} and \eqref{h:prima:0}. The proof is now  complete.
\end{proof}

\begin{lemma}\label{qk:espiral}
Let $q^*(k)<q<q_{JL}(k)$. Let $(y(t),z(t))$ be the unique solution of \eqref{Eq:Dyn}-\eqref{lim:z} and $v$ the unique solution of \eqref{Eq:v}. Then $(y(t),z(t))$ loops around $O_2$ an infinite number of times and converges to $O_2$ as $t\to +\infty$. Moreover, we have the following asymptotic behavior for $v$ at infinity
\begin{equation}\label{asymptotic}
v(s)\sim -s^{-\frac{2k}{q-k}}\;\;\mbox{as}\;\;s\to +\infty.
\end{equation}
\end{lemma}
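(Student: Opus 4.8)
The plan is to argue entirely in the phase plane of the autonomous system \eqref{Eq:Dyn}, using that the orbit attached to the unique solution $v$ of \eqref{Eq:v} stays in the open first quadrant. Integrating the equation in \eqref{Eq:v} gives $v'>0$ on $(0,\infty)$, so $dv/dt>0$ and $y=(dv/dt)^k>0$, while $v<0$ forces $z=\tilde{\lambda}e^{\frac{2kq}{q-k}t}(-v)^q>0$; by \eqref{lim:z}--\eqref{lim:y:0} the orbit emanates from $O_1$ as $t\to-\infty$. Since $q^\ast(k)<q<q_{JL}(k)$ puts us in case (iii) ($2k-a<0$ and $\varDelta(a)<0$), the eigenvalues of the Jacobian $J$ at $O_2$ are complex with negative real part $\tfrac{2k-a}{2(q-k)}$, so $O_2$ is a stable focus and trajectories spiral into it locally. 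Observe also that the asymptotics \eqref{asymptotic} will be an immediate corollary: once $(y,z)\to O_2$, in particular $z(t)\to z_2=\tilde{\lambda}$, and then \eqref{y-z:def} yields $(-v(e^t))^q\sim e^{-\frac{2kq}{q-k}t}$, i.e. $v(s)\sim -s^{-\frac{2k}{q-k}}$ with $s=e^t$. Thus the whole statement reduces to promoting the local spiral at $O_2$ to global convergence with infinitely many loops.

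The key tool I would introduce is the Dulac function $B(y,z)=z^{\frac{1}{q}-1}$ on $\{y>0,\ z>0\}$. Writing the field in \eqref{Eq:Dyn} as $(P,Q)$ with $P=z-\frac{a}{q-k}y$ and $Q=\frac{2kq}{q-k}z-q\tilde{\lambda}^{1/q}y^{1/k}z^{1-1/q}$, a short computation (in which the $y^{1/k}$--term drops out because $z^{\frac{1}{q}-1}z^{1-\frac{1}{q}}=1$) gives
\[
\partial_y(BP)+\partial_z(BQ)=\Big(-\tfrac{a}{q-k}+\tfrac{2k}{q-k}\Big)z^{\frac{1}{q}-1}=\frac{2k-a}{q-k}\,z^{\frac{1}{q}-1}<0
\]
throughout the open first quadrant, since $2k-a<0$ and $q-k>0$. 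By the Bendixson--Dulac criterion the system then has no periodic orbit and no loop (homoclinic or graphic) inside the simply connected first quadrant; in particular the orbit cannot return to $O_1$ as $t\to+\infty$, as that would form a homoclinic loop. Consequently, once the forward orbit is known to be confined to a compact subset of the open quadrant, the Poincaré--Bendixson theorem forces its $\omega$-limit set to be a single equilibrium, which must be $O_2$; and since $O_2$ is a focus the convergence is necessarily spiralling, giving the infinitely many loops. (Equivalently, applying the divergence theorem to a region bounded by one loop of the orbit and a segment of the transversal ray $z=\tfrac{a}{q-k}y$, on which $\dot y=0$ so the flow is transverse, the vanishing of the flux along the orbit arc makes the negative area integral push successive crossing points monotonically toward $O_2$.)

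The step I expect to be the main obstacle is precisely the global confinement: the arguments above presuppose that the orbit winds around $O_2$ and does not escape to infinity beforehand. I would establish this by tracing the orbit through the four regions cut out by the two nullclines $z=\frac{a}{q-k}y$ (where $\dot y=0$) and $z=\big(\frac{q-k}{2k}\big)^{q}\tilde{\lambda}\,y^{q/k}$ (where $\dot z=0$), which intersect exactly at $O_1$ and $O_2$, checking the sign of the field on a suitable large closed curve to produce a bounded, positively invariant region enclosing $O_2$ and the orbit. The non-smoothness of $Q$ along the axes and the degeneracy of $O_1$ (the factor $y^{1/k}z^{1-1/q}$ is not $C^1$ there) demand care; here I would lean on the already established facts that the orbit remains in the open quadrant and that it leaves $O_1$ in the sharp direction $z\sim ny$ from \lemref{Asym:z:1}, which fixes the region it first enters. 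Once boundedness and infinite winding are secured, the Dulac sign condition closes the argument, and combined with $z(t)\to\tilde{\lambda}$ it delivers both $\lim_{t\to+\infty}(y(t),z(t))=O_2$ and the asymptotic formula \eqref{asymptotic}.
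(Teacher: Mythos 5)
Your phase--plane framework is sound and, at the one point where the real work happens, genuinely different from the paper's. The routine parts agree: your Dulac computation is correct ($B=z^{\frac1q-1}$ kills the nonlinear term and leaves $\partial_y(BP)+\partial_z(BQ)=\frac{2k-a}{q-k}z^{\frac1q-1}<0$; the paper instead takes $\psi=z^{\frac{a}{2kq}-1}$, which kills the linear terms and leaves $-(\frac{a}{2k}-1)\tilde{\lambda}^{1/q}y^{1/k}z^{-1/q}\psi<0$ --- either choice works), and both proofs end with Poincar\'e--Bendixson plus the focus structure at $O_2$. The genuine divergence is in how one rules out $(y(t),z(t))\to O_1$ as $t\to+\infty$. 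The paper never uses Dulac for this: it assumes convergence to $O_1$, converts that into decay estimates for $v$ (its \eqref{bound-v}, \eqref{simvprima}, \eqref{simv}), and feeds them into the Pohozaev-type identity \cite[Proposition 4.1]{ClFM96}, forcing $\int_0^\infty\xi^{n-1}(-v(\xi))^{q+1}d\xi=0$, a contradiction. You instead close the orbit into a homoclinic loop through $O_1$ and invoke Green's theorem. Your way stays entirely in the plane and is conceptually cleaner, but it is not free: the loop passes through the degenerate point $O_1$, where the field fails to be $C^1$ (the factor $y^{1/k}z^{1-1/q}$) and your $B$ blows up, so Green's theorem must be applied with a small neighborhood of $O_1$ excised and the flux through the excised arc shown to vanish; this can be done (because $\frac1q-1>-1$ makes both $B$ and the divergence integrable near $z=0$), but it is exactly the kind of step that has to be written out, and you only flag it. The paper's identity argument sidesteps the degenerate point entirely, at the price of importing the ODE asymptotics and the integral identity.

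The concrete gap is the confinement step, which you correctly identify as the main obstacle but for which your primary plan is the wrong tool. A static ``large closed curve'' transverse to the field, bounding a positively invariant region, is at best very hard to produce here: in the unbounded sector above both nullclines the field points up and to the right, in the sector below both it points down and to the left (both directions are \emph{away} from $O_2$ far from the equilibria), and any region containing the forward orbit must have the degenerate source $O_1$ on its boundary; simple sign-checking on a prescribed curve will not close up. What does work is precisely your parenthetical alternative. First show the orbit cycles through the four nullcline sectors: it starts in the sector above both nullclines by \lemref{Asym:z:1} (since $n>\frac{a}{q-k}$); there its slope $dz/dy$ is bounded while remaining above the superlinear nullcline $z=\mathrm{const}\cdot y^{q/k}$ would force unbounded slope, so it crosses; analogous elementary arguments move it through the remaining sectors, using that $z>0$ for all $t$ (i.e.\ $v<0$ globally, which comes from \lemref{Uniq:sol}, not from the phase plane) and the already-excluded convergence to $O_1$. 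Then apply Green's theorem with your $B$ to the region bounded by one full loop of the orbit and the segment of the transversal ray $\{z=\frac{a}{q-k}y,\ y>y_2\}$ between consecutive crossings: since the flux along the orbit arc vanishes and the divergence has a strict sign, successive crossings must move toward $O_2$. That single argument delivers boundedness, rules out outward spiralling, and with Poincar\'e--Bendixson (closed orbits and graphics being excluded by the same Dulac function) gives convergence to $O_2$ with infinitely many loops, whence \eqref{asymptotic}. In fairness, the paper is equally terse about boundedness --- its dichotomy ``$O_2$ or a limit cycle'' presupposes it --- so your proposal, once the loop-plus-segment argument and the excision at $O_1$ are carried out, is a complete, self-contained alternative; as written, those two steps are genuine gaps rather than routine details.
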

\begin{proof}
We first show that the trajectory does not converge to $O_1$ as $t\to +\infty$. Suppose by contradiction that $(y(t),z(t))$ converges to $O_1$ as $t\to +\infty$. Then, for all $t$ large enough, $y(t)$ and $z(t)$ are decreasing functions. Hence, from \eqref{Eq:Dyn} we have
\begin{equation}\label{boundzeta}
z(t)\leq Cy(t)^\frac{q}{k}\qquad\forall t\geq \bar{t},
\end{equation}
where $\bar{t}$ is large enough and $C$ is a positive constant. Since $y,z$ are two vanishing functions as $t\to +\infty$ and $q>k$, by (\ref{Eq:Dyn}) and (\ref{boundzeta}), we conclude that
\begin{equation}\label{asympyprima}
y'(t)=-\frac{a}{q-k}\,y(t)+o(y(t))\;\;\mbox{as}\;\; t\to +\infty.
\end{equation}
Using (\ref{asympyprima}), we deduce that, for any $\epsilon >0$, there exists $T_\epsilon >0$ such that
\[
y(t)\leq C_\epsilon\,e^{\left(-\frac{a}{q-k}+\epsilon\right)t}\;\; \forall t>T_\epsilon
\]
and this, together with (\ref{y-z:def}) and (\ref{boundzeta}), yields
\begin{equation}\label{bound-v}
-v(s)\leq C_\epsilon s^{-\frac{n-2k}{k}+\epsilon}\;\;\forall s>S_\epsilon
\end{equation}
where $S_\epsilon$ is a constant depending on $\epsilon$. Choosing $\epsilon$ small enough in (\ref{bound-v}), we see that the function $s^{n-1}(-v(s))^q$ belongs to $L^1(0,\infty)$ and then by (\ref{Eq:v}) we obtain
\begin{equation}\label{simvprima}
v'(s)\sim \left(\tilde{\lambda}
\int_{0}^{\infty}\xi^{n-1}(-v(\xi))^q\,d\xi\right)^{\frac{1}{k}}s^{-\frac{n-k}{k}}\;\; \mbox{as}\;\;t\to +\infty.
\end{equation}
In turn now by (\ref{simvprima}) and L'Hospital's rule we obtain
\begin{equation}\label{simv}
v(s)\sim -\frac{k}{n-2k}\left(\tilde{\lambda}\int_{0}^{\infty}\xi^{n-1}(-v(\xi))^q\,d\xi\right)^\frac{1}{k}s^{-\frac{n-2k}{k}}\;\;\mbox{as}\;\; s\to +\infty.
\end{equation}
Next, choosing $a=\frac{n-2k}{k+1},\,b(r)=r,\,\alpha=n-k,\,\beta=k-1$ and $F(r,v)=-\tilde{\lambda}r^{n-1}\,\frac{(-v)^{q+1}}{q+1}$ in \cite[Proposition 4.1]{ClFM96}, we conclude that, for each $R>0$, the integrals on the left hand side in \cite[Proposition 4.1]{ClFM96} are zero. Consequently
\begin{align}\label{identityPPS}
& \frac{\tilde{\lambda}(n-2k)(q^*(k)-q)}{(k+1)(q+1)} \int_{0}^{R}\xi^{n-1}(-v(\xi))^{q+1}d\xi  \nonumber\\
&  = \frac{n-2k}{k+1}\,R^{n-k}v(R)(v'(R))^k +\frac{k}{k+1}\,R^{n-k+1}(v'(R))^{k+1} + \frac{\tilde{\lambda}}{q+1}\,R^n(-v(R))^{q+1}.
\end{align}
Using \eqref{simvprima}-\eqref{simv}, we deduce that the first two terms on the right hand side of (\ref{identityPPS}) go to zero as $R\to \infty$ since $n>2k$, and the last term vanishes as $R\to \infty$ since $q>q^*(k)$, which in turn yields
\[
\int_{0}^{\infty}\xi^{n-1}(-v(\xi))^{q+1}d\xi=0.
\]
This is a contradiction.

Therefore, the orbit $(y(t),z(t))$ converges to the stationary point $O_2$ or to a limit cycle. The existence of a limit cycle is excluded by the generalized version of Bendixson's Theorem. Indeed, define
\[
\psi(y,z) = z^{\frac{a}{2kq} -1},\, y>0,\, z>0.
\]
Then
\begin{align*}
\partial_y \left(\psi(y,z)\frac{d y}{dt}\right) + \partial_z \left(\psi(y,z)\frac{d z}{dt}\right) =& - \psi(y,z){\left(\frac{a}{2k} -1\right)\tilde{\lambda}^{\frac{1}{q}}y^{\frac{1}{k}} z^{-\frac{1}{q}}}<0,
\end{align*}
Since $q>q^*(k)$. Thus there is no limit cycle of \eqref{Eq:Dyn} by the Bendixson-Dulac Theorem. Hence $(y(t),z(t))$ converges to $O_2$ as $t\to +\infty$ after infinitely many loops around $O_2$ since the eigenvalues of the linearized system in $O_2$ are complex numbers in view of $q<q_{JL}(k)$. The asymptotic estimate (\ref{asymptotic}) is an immediate consequence of the change of variables \eqref{EF:var}-\eqref{y-z:def}.
\end{proof}

Next, returning to the proof of the theorem, we show the uniqueness and the multiplicity of solutions of $(P_{\lambda})$ as follows: let $s=s_0>0$ be fixed, define
\begin{equation}\label{As:lambdaS}
A(s_0)=1+(v(s_0))^{-1} \text{ and } \lambda(s_0)=\tilde{\lambda}\tau^{2k}(-v(s_0))^{q-k}
\end{equation}
and set $u_{\lambda(s_0)}(0)=A(s_0)$.  Since the component $z$ of the orbit satisfies $z(t) \to \tilde{\lambda}$ as $t\to +\infty$ for all $q>q^*(k)$ by Lemmas \ref{lem:qJL:nodo} and \ref{qk:espiral}, we have $A(s) \to -\infty$ as $s\to +\infty$ and $\lambda(s)\to \tilde{\lambda}$ as $s\to +\infty$ by the change of variable \eqref{EF:var}-\eqref{y-z:def}. Now, using the changes of variables \eqref{EF:var}-\eqref{y-z:def} together with \eqref{As:lambdaS}, we obtain the line
\begin{equation}\label{line:z}
z(t)=\tilde{\lambda}^{-\frac{k}{q-k}}\lambda(s)^{\frac{q}{q-k}},\; t\in \RR.
\end{equation}
Note that this line has range $(0,\tilde{\lambda})$. Further $\lim_{t\to -\infty}z(t) =0$ and $\lim_{t\to +\infty}z(t) =\tilde{\lambda}$ by \eqref{As:lambdaS}. Moreover, for each intersection between the line and the orbit, we obtain one and/or several times $t_0$. For each $t_0$ we define
\[
u_{\lambda(s_0)}(r) = 1 + (1-A(s_0))v(s),
\]
with $s_0=e^{t_0}$ and $s$ is given by \eqref{def:s:r}. Hence, in the case $q^*(k) <q < q_{JL}(k)$, as time increases, the line \eqref{line:z} intersects the orbit $(y(t),z(t))$ either a finite large number of times when $\lambda$ approaches $\tilde{\lambda}$, or an infinite number of times when $\lambda=\tilde{\lambda}$. In the case $q\geq q_{JL}(k)$, we see that the line \eqref{line:z} intersects the orbit $(y(t),z(t))$ only one time for each time $t_0$, which in turn implies that the problem $(P_{\lambda(s_0)})$ admits a unique solution $u_{\lambda(s_0)}$.  Now, if $\tilde{\lambda} <\lambda^*$ we see that the solution $u_{\lambda(s_0)}$ is a maximal solution by Theorem \ref{Main:3:Intro}, which is decreasing as $\lambda(s_0)$ increases. Since $\lambda(s_0) \to \tilde{\lambda}$ as $s_0\to +\infty$, we conclude that $u_{\lambda(s_0)}(0) \to A(\infty) >-\infty$ as $s_0\to +\infty$ since $\tilde{\lambda} <\lambda^*$, which is impossible by the change of variable \eqref{EF:var}-\eqref{y-z:def}. Therefore, $\tilde{\lambda} =\lambda^*$. This completes the proof.


\end{document}